\begin{document}

\newcommand{\nc}{\newcommand}
\newcommand{\bom}{{_{\mathbf{\omega}}}}
\newcommand{\st}{\divideontimes}
\def\neweq{\setcounter{theorem}{0}}
\newtheorem{theorem}[]{Theorem}
\newtheorem{proposition}[]{Proposition}
\newtheorem{corollary}[]{Corollary}
\newtheorem{lemma}[]{Lemma}
\newtheorem{example}[]{Example}
\theoremstyle{definition}
\newtheorem{definition}[]{Definition}
\newtheorem{remark}[]{Remark}
\newtheorem{conjecture}[equation]{Conjecture}
\newcommand{\dis}{{\displaystyle}}
\def\question{\noindent\textbf{Question.} }
\def\remark{\noindent\textbf{Remark.} }
\def\proof{\medskip\noindent {\textsl{Proof.} \ }}
\def\endproof{\hfill$\square$\medskip}
\def\str{\rule[-.2cm]{0cm}{0.7cm}}
\newcommand{\beq}{\begin{equation}\label}
\newcommand{\aand}{\quad{\text{\textsl{and}}\quad}}
\newcommand{\la}{\label}
\numberwithin{equation}{section}
\title{Kac's conjecture and the algebra of BPS states }
\author{Tim Cramer}
\thanks{\emph{E-mail address}: tim.cramer@yale.edu}
\maketitle
Let $\vec{Q}$ be an affine quiver and let $\mathfrak{n}$ be the positive part of the affine Lie algebra associated to $\vec{Q}$. We provide a construction of $\mathfrak{n}$ using the semistable irreducible components in the Lusztig nilpotent variety associated to $\vec{Q}$. This confirms the main conjecture of \cite{FMV2} on defining the so-called algebra of BPS states on the minimal resolution of a Kleinian singularity. Using the results of \cite{CBVdB}, we show that our construction is closely connected to Kac's constant term conjecture in the case of an affine quiver.

\section{Introduction}

Let $\Gamma$ be a finite subgroup of $SL(2,\mathbb{C})$. It is known that the Kleinian singularity $\mathbb{C}^2/\Gamma$ admits a unique crepant resolution $p:\widetilde{\mathbb{C}^2/\Gamma}\rightarrow  \mathbb{C}^2/\Gamma$. Moreover, the exceptional divisor $p^{-1}(0)$ is known to be a tree of $\mathbb{P}^1$'s with intersection matrix that is the negative of the Cartan matrix associated to some simply-laced Dynkin graph $Q$. The classical McKay correspondence relates $Q$ (or more precisely, the extended Dynkin diagram $\widehat{Q}$ obtained from $Q$) to the representation ring of the group $\Gamma$. Kronheimer \cite{Kr} and Cassens-Slodowy \cite{CS} have shown that the varieties $\mathbb{C}^2/\Gamma, \widetilde{\mathbb{C}^2/\Gamma},$ and $p^{-1}(0)$ can all be expressed as moduli spaces of representations of the preprojective algebra $\Pi_{\widehat{Q}}$ associated to $\widehat{Q}$. In particular, the exceptional divisor $p^{-1}(0)$ corresponds to the moduli space of nilpotent, semistable $\delta$-dimensional representations of $\Pi_{\widehat{Q}}$. Here $\delta$ is the first imaginary root associated to $\widehat{Q}$. For a general dimension vector $\alpha$, the moduli space of nilpotent $\alpha$-dimensional representations of $\Pi_{\widehat{Q}}$ is known as Lusztig's nilpotent variety $\Lambda_{\alpha}$. When $\alpha$ is indivisible, the number of semistable irreducible components in $\Lambda_{\alpha}$ is equal to the dimension of the root space $\mathfrak{g}_{\alpha}$ of the Kac-Moody algebra $\mathfrak{g}$ associated to $\widehat{Q}$. It is a natural question whether it is possible to realize the positive part $\mathfrak{n}\subset \mathfrak{g}$ using the semistable irreducible components in different $\Lambda_{\alpha}$'s.

A construction of the Lie algebra $\mathfrak{n}$ using the semistable irreducible components in $\Lambda_{\alpha}$ has been suggested by string theorists \cite{FM}. In the physics setting, the semistable irreducible components in $\Lambda_{\alpha}$ correspond to BPS states on the crepant resolution $\widetilde{\mathbb{C}^2/\Gamma}.$ The algebra structure of the set of BPS states was first suggested in \cite{HM} due to the similarity between certain one-loop integrals and the denominator formulas of the corresponding Kac-Moody algebra. A mathematically rigorous version of the construction of \cite{FM} is conjectured but not proved by Frenkel-Malkin-Vybornov \cite{FMV2}. This construction involves a restriction of the Hall algebra multiplication for $\Pi_{\widehat{Q}}$-mod to semistable elements. In this paper, we confirm the main conjecture of \cite{FMV2}. Our main result can be formulated as follows:

\begin{theorem}  Let $H_{\widehat{Q}}$ denote the subalgebra of the Hall algebra of $\Pi_{\widehat{Q}}$-mod generated by the elements $\{[S_i]| S_i \subset \mbox{Rep}(\Pi_{\widehat{Q}}, \alpha_i), i\in Q_0\}$. Let $H^{ss}_{\widehat{Q}}\subset H_{\widehat{Q}}$ denote the vector subspace consisting of constructible functions supported on semistable irreducible components. Then the restriction of the commutator in $H_{\widehat{Q}}$ to $H^{ss}_{\widehat{Q}}$ defines a Lie bracket. Moreover, $H^{ss}_{\widehat{Q}}$ equipped with this bracket is isomorphic to $\mathfrak{n}.$

\end{theorem}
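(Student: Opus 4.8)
The plan is to compare $H^{ss}_{\widehat Q}$ with Lusztig's Lagrangian construction of $\mathfrak n$ via constructible functions on $\Lambda_\alpha$. Recall that Lusztig realizes the whole of the enveloping algebra $U(\mathfrak n)$ as an algebra $M$ of constructible functions on the nilpotent varieties $\Lambda_\alpha$, with multiplication given by an induction (convolution) operation and with $\mathfrak n$ itself sitting inside $M$ as the Lie subalgebra generated by the functions $\mathbf 1_{\Lambda_{\alpha_i}}$. The Hall algebra $H_{\widehat Q}$ in the statement is precisely (a form of) this algebra $M$: the generators $[S_i]$ correspond to the $\mathbf 1_{\Lambda_{\alpha_i}}$, and the Hall-algebra commutator restricted to the generated subalgebra recovers Lusztig's bracket. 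The first step, therefore, is to set up this dictionary carefully, so that Theorem follows once we show: (i) $H^{ss}_{\widehat Q}$ is closed under the commutator; and (ii) as a graded vector space it has the right dimensions, namely $\dim (H^{ss}_{\widehat Q})_\alpha = \dim \mathfrak n_\alpha$ for every positive root $\alpha$, together with the matching of Lie brackets.

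For the dimension count I would argue separately in the real-root and imaginary-root gradings. For a real root $\alpha$ the space $\mathfrak n_\alpha$ is one-dimensional and $\Lambda_\alpha$ has a unique (semistable) irreducible component, so $(H^{ss}_{\widehat Q})_\alpha$ is at most one-dimensional; nonvanishing follows because the iterated bracket of the $[S_i]$'s corresponding to a reduced expression for $\alpha$ lands in $H^{ss}_{\widehat Q}$ and is nonzero in $H_{\widehat Q}$. For imaginary roots $\alpha = n\delta$ one uses the affine structure: $\mathfrak n_{n\delta}$ has dimension equal to the rank $\ell$ of $Q$ (the number of finite vertices), and on the variety side one counts semistable irreducible components of $\Lambda_{n\delta}$. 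Here is where the paper's cited input enters — the Crawley-Boevey--Van den Bergh results \cite{CBVdB} on the geometry of $\Lambda_\alpha$ for affine quivers give exactly the enumeration of (semistable) irreducible components in terms of the root multiplicities, which is what ties the construction to Kac's constant-term conjecture; this gives the upper bound $\dim(H^{ss}_{\widehat Q})_{n\delta}\le \ell$. The matching lower bound comes again from exhibiting enough explicit brackets of generators supported on semistable components.

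The key structural point — and the main obstacle — is closure: that $[f,g]\in H^{ss}_{\widehat Q}$ whenever $f,g\in H^{ss}_{\widehat Q}$. A priori the Hall product $f\ast g$ of two functions supported on semistable components need not be supported on semistable components, because extensions of semistable modules of different slopes are typically unstable; the claim is that the \emph{antisymmetrization} $f\ast g - g\ast f$ does land back in the semistable locus. I would prove this by a stability/slope analysis of the relevant Hall-algebra structure constants: decompose the support of $f\ast g$ according to Harder--Narasimhan type, and show that the non-semistable contributions to $f\ast g$ and to $g\ast f$ coincide (as constructible functions), so that they cancel in the commutator. Concretely, for a module $N$ that is unstable, the fibres of the two induction diagrams computing $(f\ast g)(N)$ and $(g\ast f)(N)$ should be identified by a canonical bijection coming from the HN filtration of $N$ (the sub/quotient pair appears in both orders), yielding equal values; this is the heart of why the bracket preserves semistability. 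Once closure is in hand, the Jacobi identity is inherited from $H_{\widehat Q}$ (or from $M$), and combining closure with the two-sided dimension count and the identification of low-degree brackets with the Serre generators of $\mathfrak n$ gives the isomorphism $H^{ss}_{\widehat Q}\cong\mathfrak n$.
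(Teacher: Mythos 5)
Your outline never specifies \emph{which} stability conditions define ``semistable,'' and this is where it comes apart. The paper does not use one slope function: it takes the canonical character $\lambda_{\alpha}$ separately for each irregular real root $\alpha$ and the deliberately non-generic character $\lambda_{reg}=e(\delta,\cdot)$ for regular roots, and it stresses that for quivers with tubes of rank $\ge 3$ the theorem \emph{fails} for a generic stability condition. This undercuts your dimension count at imaginary roots: the Crawley-Boevey--Van den Bergh enumeration of semistable components that you invoke is proved only for \emph{indivisible} $\alpha$ with \emph{generic} $\lambda$, whereas $m\delta$ is divisible for $m\ge 2$ and $\lambda_{reg}$ ``lies on the maximal number of walls.'' In the paper the count in degree $m\delta$ comes instead from the Dlab--Ringel classification of regular modules into tubes, together with the identification of the $\lambda_{reg}$-semistable $\Pi_Q$-modules as exactly those $X$ for which $\rho(X)$ is regular (Corollaries 1--2 and Proposition 3).

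The more serious gap is your closure step. The proposed cancellation of unstable contributions via the Harder--Narasimhan filtration is false: if $X,Y$ are stable with $\mu(X)>\mu(Y)$ and $0\rightarrow X\rightarrow Z\rightarrow Y\rightarrow 0$ is a non-split extension, then $G^{Z}_{X,Y}$ is non-empty while $G^{Z}_{Y,X}$ is empty (a sub isomorphic to $Y$ would split the sequence or map nontrivially to $X$), so the commutator $[{[X]},{[Y]}]$ is genuinely supported on the unstable module $Z$ --- the sub/quotient roles are not interchangeable for an unstable extension. Moreover, with root-dependent stability conditions there is not even a single HN theory to run this argument in. The paper does not prove closure at all: its bracket on $H^{ss}_{\widehat{Q}}$ is the commutator in $H_{\widehat{Q}}$ \emph{truncated} to the semistable components (this is explicit in the definition of $\widehat{E}_0(m)$), and both the Jacobi identity and the isomorphism with $\mathfrak{n}$ are obtained simultaneously by lifting, through the isomorphism $\rho_*:H_{\widehat{Q}}\rightarrow M_{\vec{Q}}$, the Frenkel--Malkin--Vybornov realization of the Frenkel--Kac presentation $\mathfrak{n}^{\epsilon^*}(\vec{Q})$, using an explicit basis $\widehat{E}_{\alpha}$, $\widehat{E}_{i,j,l}$, $\widehat{E}_0(m)$ indexed by the classified indecomposables. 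Without such an explicit basis your outline supplies no mechanism for either closure or the verification of the bracket relations.
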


The isomorphism between $\mathfrak{n}$ and $H^{ss}_{\widehat{Q}}$ complements Lusztig's construction \cite[Section 12]{L}, which defines an isomorphism $\psi:U(\mathfrak{n})\xrightarrow{\sim} H_{\widehat{Q}}.$ Recently, Kontsevich-Soibelman \cite{KS} have defined a version of the Hall algebra related to Lusztig's construction of the canonical basis \cite{L} that is thought to encode the algebra structure of the set of BPS states. Their construction uses cohomology spaces of moduli stacks of objects rather than constructible functions. These constructions are related by the fact that the semistable irreducible components in $\Lambda_{\delta}$ index a basis for the cohomology space $H^2(\widetilde{\mathbb{C}^2/\Gamma})$. 

In general, Hall algebras provide a framework for studying Donaldson-Thomas type invariants, which are thought to ``count" BPS states. The fact that the product in $H^{ss}_{\vec{Q}}$ is given by multiplication in the Hall algebra is important due to the functorial properties of Hall algebras (see e.g. \cite{C}). As shown by Kapranov-Vasserot \cite{KV}, the McKay correspondence lifts to a derived equivalence
\[Rp_{2*}RHom(\mathcal{O}_{\Sigma}, \cdot):D^b(Coh(\widetilde{\mathbb{C}^2/\Gamma}))\rightarrow D^b(Coh_\Gamma(\mathbb{C}^2))\]
Moreover, there is an equivalence $Coh_\Gamma(\mathbb{C}^2)\cong \Pi_{\widehat{Q}}$-mod. Physicists suggest that string theory on the crepant resolution $\widetilde{\mathbb{C}^2/\Gamma}$ should be related to string theory on the Kleinian singularity $\mathbb{C}^2/\Gamma$. Similar predictions have been made relating the Donaldson-Thomas type invariants of CY3 orbifolds and their crepant resolutions \cite{BCY}. Determining the structure of the Hall algebras of categories such as $\Pi_{\widehat{Q}}$-mod should play an important role in the understanding of these conjectures.

In addition to these (conjectural) applications, Theorem 1 also has important implications for Kac's conjecture, which we now discuss. Let $\vec{Q}=(Q_0, Q_1, s, t)$ be any finite quiver without loops. Here $Q_0$ denotes the set of vertices, $Q_1$ denotes the set of arrows, and the maps $s, t: Q_1\rightarrow Q_0$ assign to each arrow their starting and terminal vertices, respectively. Associated to such a quiver $\vec{Q}$ with $\#Q_0=n$ vertices is the $n\times n$ Cartan matrix $(c_{i,j})$ defined by 

\[ c_{i,j} = \left\{ \begin{array}{ll}
         2 & \mbox{if $i=j$}\\
        -\# \{a\in Q_1: (s(a), t(a))=(i,j) \mbox{ or } (j,i)\} & \mbox{if $i\ne j$} \end{array} \right. \] 
One can then associate to $(c_{i,j})$ a Kac-Moody algebra $\mathfrak{g}.$  In \cite{K1}, Kac proved that $\vec{Q}$ has an indecomposable representation in dimension $\alpha\in \mathbb{N}^{Q_0}$ if and only if $\alpha$ is a positive root of $\mathfrak{g}.$ Furthermore, there is a unique isomorphism class of indecomposable representations corresponding to each real root and there are several isomorphism classes corresponding to each imaginary root. When the base field $k$ is algebraically closed, the isomorphism classes of indecomposable representations corresponding to each imaginary root form an infinite family. However, over finite fields one can hope to relate these isomorphism classes to the root space $\mathfrak{g}_{\alpha}.$

Let $k=\mathbb{F}_q$ and let $a_{\alpha}(q)$ denote the number of isomorphism classes of absolutely indecomposable representations of $\vec{Q}$ in dimension $\alpha.$ Recall that a representation is said to be absolutely indecomposable if it is indecomposable over every field extension of the base field $k.$ In \cite{K2}, Kac proved that $a_{\alpha}(q)$ is a polynomial in $q$ with integral coefficients. Moreover, he made the following conjecture:

\begin{conjecture} (\cite{K2}) Let $\vec{Q}$ be a finite quiver without loops and let $\alpha$ be a positive root of the Kac-Moody algebra $\mathfrak{g}$ associated to $\vec{Q}$. Then

a) $a_{\alpha}(q)\in \mathbb{N}[q]$

b) $a_{\alpha}(0)=\dim \mathfrak{g}_{\alpha}$

\end{conjecture}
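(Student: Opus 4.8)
The plan is to separate the two parts, since (b) concerns a single coefficient while (a) constrains the entire polynomial, and to pass immediately from the intrinsic count to geometry. For a real root $\alpha$ both parts are immediate: Kac's theorem gives $a_\alpha(q)=1$ and $\dim\mathfrak{g}_\alpha=1$, so all the content lies in the imaginary roots. The first reduction I would make is to rewrite $a_\alpha(q)$ through point counts over $\mathbb{F}_q$: by Hua's formula, equivalently Kac's recursion expressing the number of absolutely indecomposables in terms of the cardinalities $|\mathrm{Rep}(\vec{Q},\beta)(\mathbb{F}_q)|$ of the representation varieties, $a_\alpha(q)$ becomes an explicit plethystic-logarithmic combination of manifest polynomials in $q$. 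Conceptually this says that the $a_\alpha$ are the simple-root multiplicities of a Borcherds algebra attached to the Hall algebra of $\vec{Q}$, which explains on the nose why (b) should hold but gives no control on signs.

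For part (b) I would follow the interpretation of the constant term due to Crawley-Boevey--Van den Bergh \cite{CBVdB}. The point is that $a_\alpha(0)$ records the indecomposables carrying no moduli, governed by the moduli space of simple representations of the deformed preprojective algebra $\Pi^\lambda_{\vec{Q}}$ for a generic deformation parameter $\lambda$. When $\alpha$ is indivisible this moduli space is smooth; counting its $\mathbb{F}_q$-points computes $a_\alpha(0)$, and identifying this count with $\dim\mathfrak{g}_\alpha$, through a reflection-functor and Weyl-group argument, gives part (b). For the affine quivers $\vec{Q}=\widehat{Q}$ of the present paper this identification is supplied directly by Theorem 1, via the semistable irreducible components of $\Lambda_\alpha$, which simultaneously count $\dim\mathfrak{g}_\alpha$ and organize the relevant classes. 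To reach arbitrary, and in particular divisible, $\alpha$ I would replace these moduli by the Nakajima quiver varieties of $\vec{Q}$: their cohomology realizes $\dim\mathfrak{g}_\alpha$ as the dimension of a weight space in an integrable module, and the constant term of the Kac polynomial can be matched to it once one knows that the cohomology of these varieties is pure.

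Part (a) cannot follow from the arithmetic identity alone, since the alternating signs in Hua's formula obscure positivity; what is needed is a geometric model whose Poincar\'e polynomial is exactly $a_\alpha(q)$. I would produce such a model by applying Katz's theorem relating $\mathbb{F}_q$-point counts to mixed Hodge structures to the comet-shaped (doubled) quiver and its associated character and quiver varieties, so that $a_\alpha(q)$ equals their $E$-polynomial. Positivity then reduces to the purity of the compactly supported cohomology of this model, which forces the $E$-polynomial to be an honest polynomial in $q$ with nonnegative coefficients. The hard part will be exactly this purity, together with constructing the model uniformly when $\alpha$ is divisible and the natural moduli spaces are singular: establishing purity there requires genuine Hodge theory --- weight estimates for the relevant mixed Hodge structures, obtained via the decomposition theorem for a resolution by a smooth quiver variety --- and is the single point on which both the positivity in (a) and the divisible case of (b) rest. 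It is here that the Hall-algebra techniques underlying Theorem 1 must be supplemented by honest geometry.
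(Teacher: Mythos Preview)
Your proposal sketches the general proof strategy for Kac's conjecture due to Crawley-Boevey--Van den Bergh, Hausel, and Hausel--Letellier--Rodriguez-Villegas, and as a high-level research plan it is broadly accurate. But you should note that the paper does not itself prove this statement in general: it is recorded as a \emph{conjecture}, with the general proofs attributed to \cite{CBVdB}, \cite{Ha}, and \cite{HLRV}. The only self-contained argument the paper supplies is for affine $\vec{Q}$, at the end of Section~2, and that argument bears no resemblance to yours.

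The paper's affine proof is completely elementary and uses no moduli spaces, no purity, no Nakajima varieties, and no Hua-type formulas. It rests entirely on the Dlab--Ringel classification (Proposition~2): the indecomposables in dimension $m\delta$ are the regular objects $P_{z,j,mN_z}$ sitting in tubes indexed by $z\in\mathbb{P}^1(k)$, and the combinatorial identity $\sum_{z}(N_z-1)=n-1$ then gives, over $k=\mathbb{F}_q$, the direct count $\sum_z N_z=q+n=q+\dim\mathfrak{g}_\delta$. Hence $a_\delta(q)=q+n$, and since there is a bijection between indecomposables in dimension $m\delta$ and in dimension $\delta$, one gets $a_{m\delta}(q)=a_\delta(q)$. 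Parts (a) and (b) are read off immediately from this explicit linear polynomial; for real roots both are trivial since $a_\alpha(q)=1$.

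So the comparison is this: your route is the heavy geometric machinery needed for wild quivers, where no explicit classification of indecomposables exists and one is forced through cohomological purity of quiver or character varieties; the paper's route in the affine case is a one-line consequence of a pre-existing classification. Each buys exactly what you would expect: yours generalizes, the paper's is transparent and computes $a_\alpha(q)$ on the nose. For the purposes of matching the paper, however, your proposal is simply not what is done there.
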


After decades of little progress, this conjecture was proved for indivisible dimension vectors by Crawley-Boevey and Van den Bergh \cite{CBVdB}. Conjectures 1.1.b and 1.1.a were proved in full generality by Hausel \cite{Ha} and Hausel-Letellier-Rodriguez-Villegas \cite{HLRV}, respectively. Kac's conjecture was known in the case of Dynkin and affine quivers at the time of \cite{K2} due to earlier work \cite{DF, DR, Gab, Na}, however it was not known to hold for any wild quiver until the work of Hausel et al.

Conjecture 1.1.a is suggestive of a cohomological interpretation of the polynomials $a_{\alpha}(q)$. However, the moduli stacks Rep$^{a.i.}(\vec{Q}, \alpha)//GL(\alpha)$ are not representable and any generalization of the Grothendieck-Lefschetz trace formula \cite{Be, Kim} to this setting counts isomorphism classes with multiplicity. Crawley-Boevey and Van den Bergh \cite{CBVdB} bypass this difficulty by relating the absolutely indecomposable representations of $\vec{Q}$ to the semistable representations of the preprojective algebra $\Pi_{Q}=\mathbb{C}\overline{Q}/{\sum [a, a^*]}$ associated to $\vec{Q}$. Explicitly, they prove that
\[a_{\alpha}(q)= \sum_{i=0}^d \dim H^{2d-2i}(X_s, \mathbb{C})q^i\]
when $\alpha$ is an indivisible dimension vector. Here $X_s$ is the moduli space of semistable $\alpha$-dimensional representations of the preprojective algebra $\Pi_Q$ and $d=$ dim $X_s/2.$ It follows from this formula that the constant term $a_{\alpha}(0)$ counts the number of semistable irreducible components in Lusztig's nilpotent variety $\Lambda_{\alpha}$. The authors of \cite{CBVdB} pose the following question:

\begin{question} Can one establish an explicit bijection between the (semi)stable irreducible components in $\Lambda_{\alpha}$ and a basis for $\mathfrak{n}_{\alpha}$?
\end{question}

Theorem 1 gives a positive answer to this question when $\vec{Q}$ is affine. The construction of $H^{ss}_Q$ can be seen to give a conceptual explanation for Kac's constant term conjecture (Conjecture 1.1.b) in the case of an affine quiver. Although $\mathfrak{n}$ has been constructed from the indecomposable representations of an affine quiver \cite{FMV1}, without the cohomological interpretation of $a_{\alpha}(q)$ the connection between $\mathfrak{n}$ and the constant terms $a_{\alpha}(0)$ is ad hoc. The link between the constant terms $a_{\alpha}(0)$ and the semistable irreducible components in $\Lambda_{\alpha}$ is also especially direct in the case of an affine quiver. When the orientation of $\vec{Q}$ is chosen such that each vertex is either a source or a sink, it should be possible to reformulate our construction of $\mathfrak{n}$ entirely in terms of the McKay correspondence (i.e. in terms of representations of $\Gamma$, cf \cite{L2}). We hope to consider this problem in a future paper.

The paper is organized as follows: In section 2, we review the representation theory of affine quivers, including the proof of Conjecture 1.1 in this special case. In section 3, we discuss different moduli spaces of quiver representations and explain the connection between $a_{\alpha}(0)$ and the semistable irreducible components in Lusztig's nilpotent variety $\Lambda_{\alpha}.$ In section 4, we return to the case when $\vec{Q}$ is an affine quiver. We discuss different constructions of $\mathfrak{n}$, and finally construct $\mathfrak{n}$ using the semistable irreducible components in $\Lambda_{\alpha}$ (Theorem 1).

\emph{Acknowledgements.} I am grateful to my advisor, Professor Mikhail
Kapranov, for his guidance and ongoing support. I would also like to thank Professor Igor Frenkel for many interesting and helpful conversations. Finally, I would like to thank Professor Emmanuel Letellier for directing me to \cite{HLRV} for the proof of Conjecture 1.1.a.

\section{Kac's conjecture for affine quivers}

\subsection{Affine root systems and Coxeter transformations}

Let $\vec{Q}=(Q_0, Q_1, s, t)$ be an affine quiver, i.e. a quiver with underlying graph an extended A-D-E Dynkin diagram that has no oriented cycles. The Euler form $e:\mathbb{Z}^{Q_0}\times \mathbb{Z}^{Q_0}\rightarrow \mathbb{Z}$ of $\vec{Q}$ is the biadditive form defined by
\[ e(\alpha_i,\alpha_j) = \left\{ \begin{array}{ll}
         1 & \mbox{if $i=j$}\\
        -\# \{a\in Q_1: (s(a), t(a))=(i,j)\} & \mbox{if $i\ne j$} \end{array} \right. \] 
for $i, j\in Q_0.$ Here $\alpha_i$ and $\alpha_j$ denote the standard basis vectors in $\mathbb{Z}^{Q_0}$ corresponding to $i$ and $j$. The symmetrized Euler form $(, ):\mathbb{Z}^{Q_0}\times \mathbb{Z}^{Q_0}\rightarrow \mathbb{Z}$ is defined by $(\alpha, \beta)=e(\alpha, \beta)+e(\beta, \alpha)$ and the Tits form $q: \mathbb{Z}^{Q_0}\rightarrow \mathbb{Z}$ is defined by $q(\alpha)= e(\alpha, \alpha)$. Note that the entries of the Cartan matrix associated to $\vec{Q}$ are given by $c_{i, j}=(\alpha_i, \alpha_j)$. Vectors $\alpha\in \mathbb{Z}^{Q_0}$ satisfying $q(\alpha)\le 1$ are referred to as \textit{roots} and the standard basis vectors $\alpha_i\in \mathbb{Z}^{Q_0}, i\in Q_0$ are referred to as \textit{simple roots}. Roots which belong to $\mathbb{N}^{Q_0}$ are referred to as \textit{positive roots}. Denote the set of roots by $\Delta(\vec{Q})$ and the set of positive roots by $\Delta^+(\vec{Q})$. To each simple root there is an associated automorphism $s_i\in \mbox{Aut} (\mathbb{Z}^{Q_0})$ given by
\[s_i(\beta)= \beta - (\beta, \alpha_i)\alpha_i\]
for $\beta\in \mathbb{Z}^{Q_0}.$ Because $\vec{Q}$ is assumed to have no loops, each automorphism $s_i$ has order 2 and we refer to these as \textit{simple reflections}. The roots obtained from simple roots by simple reflections are referred to as \textit{real roots} and all other roots are referred to as \textit{imaginary roots}. Denote the set of real and imaginary roots by $\Delta^{re}(\vec{Q})$ and $\Delta^{im}(\vec{Q})$. Because $\vec{Q}$ is affine, there is a unique imaginary root $\delta$ such that $rad(q)=\mathbb{Z}\delta$ and such that all imaginary roots are of the form $m\delta$ for $m\in \mathbb{Z}$. A vertex $e\in Q_0$ is called an \textit{extending vertex} if $\delta_e=1$. Any affine quiver $\vec{Q}$ contains an extending vertex, which in general is not unique. By removing an extending vertex from $\vec{Q}$, one obtains a (non-extended) Dynkin quiver of the same type as $\vec{Q}$. The \textit{affine Coxeter transformation} $c:\mathbb{Z}^{Q_0}\rightarrow \mathbb{Z}^{Q_0}$ is a product of all simple reflections that satisfies $e(\alpha, \beta) = - e(\beta, c \alpha)$ for all $\alpha, \beta\in \mathbb{Z}^{Q_0}.$ Note that $c\alpha=\alpha$ if and only if $\alpha\in \mathbb{Z}\delta.$ Roots for which the affine Coxeter transformation $c$ acts periodically are referred to as \textit{regular roots} $\Delta^{reg}(\vec{Q})$. All other roots are referred to as \textit{irregular roots} $\Delta^{irr}(\vec{Q})$ and these are related to simple roots $\alpha_i$ by repeated application of $c$. Note that $\Delta^{im}(\vec{Q})\subset \Delta^{reg}(\vec{Q})$ but real roots may be regular or irregular, depending on the orientation of $\vec{Q}$. A criterion for distinguishing between regular and irregular roots can be given using the \textit{defect} $\partial:\mathbb{Z}^{Q_0}\rightarrow \mathbb{Z}$. This is defined by
\[\partial(\alpha)=e(\delta, \alpha)\]
A root $\alpha\in \Delta(\vec{Q})$ is regular if and only if $\partial(\alpha)=0$.

Associated to $\vec{Q}$ is the affine Kac-Moody algebra $\mathfrak{g}$. This is an infinite-dimensional $\mathbb{Z}^{Q_0}$-graded Lie algebra. For a given $\alpha\in \mathbb{Z}^{Q_0}$, the subspace $\mathfrak{g}_{\alpha}$ is non-empty if and only if $\alpha=0$ or $\alpha$ is a root. When $\alpha$ is a root, the dimension of $\mathfrak{g}_{\alpha}$ is called the multiplicity of $\alpha$. Real roots have multiplicity $1$ and imaginary roots have multiplicity $n$ when $\vec{Q}$ has $n+1$ vertices. The vector space
\[\mathfrak{n}=\bigoplus_{\alpha\in \Delta^+(\vec{Q})} \mathfrak{g}_{\alpha}\]
forms a Lie subalgebra of $\mathfrak{g}$. It can be constructed as the quotient of the free Lie algebra on the generators $\{e_i\}_{i\in Q_0}$ over $\mathbb{C}$ by the \textit{Serre relations}
\[(ad(e_i))^{1-c_{i,j}}(e_j)=0\]
for $i\ne j$.

\subsection{Representation theory of affine quivers}

Recall that the category of representations of $\vec{Q}$ over $k$ is equivalent to the category of $k\vec{Q}$-modules, where $k\vec{Q}$ is the path algebra of $\vec{Q}.$ Let $k\vec{Q}$-mod denote the category of left $k\vec{Q}$-modules and let mod-$k\vec{Q}$ denote the category of right $k\vec{Q}$-modules. The duality functors $D:k\vec{Q}\mbox{-mod} \rightarrow \mbox{mod-}k\vec{Q}$ and $D: \mbox{mod-}k\vec{Q} \rightarrow k\vec{Q}\mbox{-mod}$ are defined by $D(X)=\mbox{Hom}(X,k)$. To each vertex $i\in Q_0$ we can associate a simple representation $S_i$ given by $(S_i)_i=k$ and $(S_i)_j=0$ for $j\ne i$. Because $\vec{Q}$ is assumed to have acyclic orientation, the set $\{S_i\}_{i\in Q_0}$ is the full set of simple representations of $\vec{Q}$. Therefore, we can identify the Grothendieck group $K_0(k\vec{Q}\mbox{-mod})$ with $\mathbb{Z}^{Q_0}$ and to each representation $X\in k\vec{Q}$-mod we can associate a \textit{dimension vector} $\underline{dim}(X)\in \mathbb{Z}^{Q_0}$. Note that
\[\mbox{dim Hom}(S_i, S_j)=\delta_{i, j}\]
and
\[\mbox{dim Ext}^1(S_i, S_j)= \#\{a\in Q_1: (s(a), t(a))=(i, j)\}\]
It follows that 
\[\mbox{dim Hom}(S_i, S_j)-\mbox{dim Ext}^1(S_i, S_j)= e(\underline{dim}(S_i), \underline{dim}(S_j))\]
for any $i, j\in Q_0$. In fact, using dimension shifting, one can show that this holds in general:
\[\mbox{dim Hom}(M, N) - \mbox{dim Ext}^1(M, N)= e(\underline{dim}(M), \underline{dim}(N))\]
for any $M, N\in k\vec{Q}$-mod. In addition to the representations $\{S_i\}$, also associated to each vertex $i\in Q_0$ is the projective (respectively, injective) indecomposable $P(i)$ (respectively, $I(i)$). This has as a basis the set of paths starting (respectively, terminating) at $i$. These form the full set of isomorphism classes of projective (respectively, injective) indecomposable representations of $\vec{Q}$.

\begin{definition} The Auslander-Reiten translation $\tau:k\vec{Q}\mbox{-mod}\rightarrow k\vec{Q}\mbox{-mod}$ is defined by $\tau(X)=\mbox{DExt}^1(X, k\vec{Q}).$ The inverse Auslander-Reiten translation $\tau^{-}:k\vec{Q}\mbox{-mod}\rightarrow k\vec{Q}\mbox{-mod}$ is defined by $\tau^{-}(X)=\mbox{Ext}^1(D(X), k\vec{Q}).$

\end{definition}

The Auslander-Reiten translation defines an equivalence from non-projective indecomposables to non-injective indecomposables and the inverse Auslander-Reiten translation provides its quasi-inverse. Using the fact that $K_0(k\vec{Q}\mbox{-mod})\cong \mathbb{Z}^{Q_0}$, $\tau$ (together with the Nakayama functor, which we have not defined) can be seen to give a categorification of the affine Coxeter transformation $c.$ For non-projective indecomposables $X\in k\vec{Q}$-mod, the Auslander-Reiten translation $\tau$ satisfies $\underline{dim}(\tau(X))=c(\underline{dim}(X))$. It is known that $\tau$ and $\tau^{-}$ can be expressed as compositions of the Bernstein-Gelfand-Ponomarev reflection functors \cite{BGP}, which categorify the simple reflections $s_i$.

\begin{definition} An indecomposable representation $X\in k\vec{Q}$-mod is said to be \textit{preprojective} if $\tau^N(X)=0$ for some $N$ and \textit{preinjective} if $(\tau^{-})^N(X)=0$ for some $N$. An indecomposable representation that is not preprojective or preinjective is referred to as \textit{regular}.

\end{definition}
Note that any projective indecomposable $X$ is preprojective, as $\tau(X)=0$. Similarly, any injective indecomposable is preinjective. Decomposable representations of $\vec{Q}$ are said to be \textit{preprojective}, \textit{preinjective}, or \textit{regular} if each of their summands is a preprojective, preinjective, or regular indecomposable, respectively. Denote the full subcategories of preprojective, preinjective, and regular modules by $\mathcal{P}, \mathcal{I}$, and $\mathcal{R}$. For an abelian category $\mathcal{A}$ with full subcategories $\mathcal{B}, \mathcal{C}\subset \mathcal{A}$, we write
\[\mathcal{B}\prec \mathcal{C}\]
if Hom$(Y,X)=\mbox{Ext}^1(X,Y)=0$ whenever $X\in \mathcal{B}$ and $Y\in \mathcal{C}$. Using basic properties of $\tau$ and $\tau^{-}$, one can show that
\[\mathcal{P} \prec \mathcal{R} \prec \mathcal{I}\]
It follows that the category of regular modules is closed under extensions.

\begin{proposition} A representation $X$ is regular if and only if $\partial(\underline{dim}(X))=0$ and $\partial(\underline{dim}(Y))\le 0$ for all $Y\subset X$.

\end{proposition}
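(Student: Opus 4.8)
The plan is to prove both implications using the characterization of regular modules via the ordering $\mathcal{P}\prec\mathcal{R}\prec\mathcal{I}$ together with the defect function $\partial$. First I would establish the easy direction: suppose $X$ is regular. Since regular indecomposables have dimension vectors that are regular roots (the Coxeter transformation acts periodically on them), and regular roots are exactly those with $\partial=0$, additivity of $\partial$ gives $\partial(\underline{dim}(X))=0$ for any regular module. For a submodule $Y\subset X$, I would decompose $Y$ (or rather analyze its image in the quotient) using the torsion-pair-like structure: because $\mathcal{P}\prec\mathcal{R}$, a regular module $X$ has no preinjective submodules, and because $\mathcal{R}\prec\mathcal{I}$ there is no nonzero map from a preinjective to $X$; more usefully, any submodule of a regular module has no preinjective summands (a preinjective summand of $Y$ would give a nonzero map into $X$, contradicting $\mathcal{R}\prec\mathcal{I}$). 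So $Y\in\mathcal{P}\oplus\mathcal{R}$ in the sense that its indecomposable summands are preprojective or regular. Preprojective summands have negative defect and regular summands have defect zero, so $\partial(\underline{dim}(Y))\le 0$. That proves the forward direction.

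For the converse, suppose $\partial(\underline{dim}(X))=0$ and $\partial(\underline{dim}(Y))\le 0$ for all submodules $Y\subset X$. I would decompose $X=X_{\mathcal{P}}\oplus X_{\mathcal{R}}\oplus X_{\mathcal{I}}$ into its preprojective, regular, and preinjective parts; this decomposition exists because $\mathcal{P}\prec\mathcal{R}\prec\mathcal{I}$ forces $\mathrm{Hom}$ and $\mathrm{Ext}^1$ to vanish in the ``wrong'' directions, so the three subcategories split off as direct summands (there are no nonsplit extensions linking them in the relevant direction, and in fact $X$ filters as $X_{\mathcal{P}}\hookrightarrow X\twoheadrightarrow (X_{\mathcal{R}}\oplus X_{\mathcal{I}})$, etc.). I want to show $X_{\mathcal{P}}=X_{\mathcal{I}}=0$. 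First, the preprojective part: $X_{\mathcal{P}}$ is a submodule of $X$ (since $\mathcal{P}$ sits at the bottom of the filtration), and if it were nonzero it would have strictly negative defect. Hmm — but that alone does not contradict the hypothesis, since negative defect on a submodule is allowed. So I need to use $\partial(\underline{dim}(X))=0$: if $X_{\mathcal{P}}\ne 0$ then $\partial(\underline{dim}(X_{\mathcal{P}}))<0$, hence $\partial(\underline{dim}(X_{\mathcal{R}}\oplus X_{\mathcal{I}}))=\partial(\underline{dim}(X))-\partial(\underline{dim}(X_{\mathcal{P}}))>0$. But $X_{\mathcal{R}}\oplus X_{\mathcal{I}}$ contains $X_{\mathcal{I}}$ as a submodule (it is on top of $X$, but $X_{\mathcal{I}}$ alone... actually $X_{\mathcal{R}}$ is a submodule of $X_{\mathcal{R}}\oplus X_{\mathcal{I}}$ and $X_{\mathcal{R}}$ has defect $0$, so $X_{\mathcal{R}}\oplus X_{\mathcal{I}}$ has positive defect means $X_{\mathcal{I}}$ has positive defect). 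This forces $X_{\mathcal{I}}\ne 0$ with $\partial(\underline{dim}(X_{\mathcal{I}}))>0$. Now I would exhibit a submodule of $X_{\mathcal{I}}$ (hence of $X$) with positive defect to contradict the hypothesis: every nonzero preinjective module has a submodule of positive defect — indeed a preinjective indecomposable $Z$ itself has $\partial(\underline{dim}(Z))>0$ (apply $\partial$ to $c^{-N}$ of a simple, or use that preinjectives have positive defect by definition of the ordering with respect to $\delta$), so $X_{\mathcal{I}}$ with all summands preinjective has $\partial(\underline{dim}(X_{\mathcal{I}}))>0$, and $X_{\mathcal{I}}\subset X$ is... wait, $X_{\mathcal{I}}$ is a quotient of $X$, not a submodule. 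I need a submodule instead: take the socle-type argument, or rather note that since $X_{\mathcal{R}}\oplus X_{\mathcal{I}}$ is a quotient of $X$ and has positive defect, while $X$ has defect $0$; instead I should directly contradict via $X_{\mathcal{P}}$: actually the clean statement is that $X_{\mathcal{P}}$ being a submodule of $X$ with $\partial<0$ is fine, but then I should look at whether $X$ itself — no. Let me restructure: assume $X_{\mathcal{I}}\ne 0$; I claim $X$ then has a submodule of positive defect. Since $\mathrm{Ext}^1(X_{\mathcal{I}}, X_{\mathcal{P}}\oplus X_{\mathcal{R}})$ — by $\mathcal{P}\prec\mathcal{I}$ and $\mathcal{R}\prec\mathcal{I}$ this $\mathrm{Ext}^1$ vanishes, so $X_{\mathcal{I}}$ is actually a direct summand and hence a submodule of $X$; this gives a submodule of positive defect, contradiction. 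So $X_{\mathcal{I}}=0$. Then $\partial(\underline{dim}(X_{\mathcal{P}}))=\partial(\underline{dim}(X))=0$, but $X_{\mathcal{P}}$ nonzero forces strictly negative defect, so $X_{\mathcal{P}}=0$. Hence $X=X_{\mathcal{R}}$ is regular.

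The main obstacle I anticipate is the careful justification that $X$ splits as $X_{\mathcal{P}}\oplus X_{\mathcal{R}}\oplus X_{\mathcal{I}}$ with the correct sub/quotient relationships — in particular that $X_{\mathcal{I}}$ is a direct summand (so a submodule) using $\mathrm{Ext}^1$-vanishing from $\mathcal{P}\prec\mathcal{I}$ and $\mathcal{R}\prec\mathcal{I}$, and dually that $X_{\mathcal{P}}$ is a submodule — together with the input facts that preprojective modules have negative defect, preinjective modules have positive defect, and regular modules have zero defect. These defect-sign facts follow from the relationship between $\tau$, the Coxeter transformation $c$, and the defect $\partial(\alpha)=e(\delta,\alpha)$: applying $c$ does not change $\partial$ (since $c$ fixes $\delta$ and $\partial\circ c = \partial$ up to the identity $e(\alpha,\beta)=-e(\beta,c\alpha)$, giving $e(\delta,c\alpha) = -e(c\alpha, c\delta)\cdot(\ldots)$ — one checks $\partial(c\alpha)=\partial(\alpha)$), so a preprojective indecomposable, being $c^{-N}$ of a projective $P(i)$ for large $N$, has the same defect as $P(i)$, and one computes $\partial(\underline{dim}P(i))=e(\delta,\underline{dim}P(i))=-\dim\mathrm{Hom}(S_i\text{-related})<0$ via the Euler form pairing with $\delta$; dually for preinjectives and $I(i)$. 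I would cite these as standard facts from the structure theory of affine quivers reviewed above (or prove the defect-of-projective computation in one line using $e(\delta, \underline{dim} P(i)) = \delta_i$ appropriately signed). Once these sign facts and the splitting are in hand, the argument above is just bookkeeping with additivity of $\partial$.
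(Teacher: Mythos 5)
Your proof is correct and follows essentially the same route as the paper's: the forward direction uses that regular roots have zero defect plus the sign of the defect on preprojective summands of a submodule, and the converse rules out preinjective submodules and preprojective direct summands by additivity of $\partial$. The only real difference is how you justify that preprojectives have negative defect (and preinjectives positive): you use $c$-invariance of $\partial$ to reduce to $P(i)$, whereas the paper pairs a preprojective $Y$ directly against a regular module $Z$ of dimension $\delta$ with $\mathrm{Hom}(Y,Z)\neq 0$; both are standard and adequate.
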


\begin{proof}
If $M$ is a regular indecomposable, then $\underline{dim}(M)$ must be a regular root. Hence, $\partial(\underline{dim}(M))=0$. By the additivity of the Euler form, $\partial(\underline{dim}(X))=0$ for any regular module $X$. If $Y$ is a submodule of $X$, then $Y$ must be either preprojective or regular. If $Y$ is preprojective, then
\[\partial(\underline{dim}(Y))=e(\delta, \underline{dim}(Y))=-e(\underline{dim}(Y), \delta)=-(\mbox{dim Hom}(Y, Z) - \mbox{dim Ext}^1(Y, Z))\]
for some regular indecomposable $Z\in \mbox{Rep}(\vec{Q}, \delta)$. Assuming that $Y$ is a submodule of a regular module, it is possible to choose $Z$ such that Hom$(Y, Z)\ne 0$ (in fact, this holds for any preprojective indecomposable $Y$). Because $\mathcal{P}\prec \mathcal{I}$, we have Ext$^1(Y, Z)=0$ and hence, $\partial(\underline{dim}(Y))<0$. One can show by similar arguments that for any preinjective indecomposable $W$, $\partial(\underline{dim}(W))>0$.

Now assume that a representation $X$ satisfies $\partial(\underline{dim}(X))=0$ and $\partial(\underline{dim}(Y))\le 0$ for all $Y\subset X$. Then it cannot admit any preinjective submodules. Moreover, by the additivity of $\partial$, it cannot admit any preprojective summands. Therefore, it must be regular.

\end{proof}

Preprojective and preinjective indecomposables can be obtained from simple representations by repeated applications of $\tau$ and $\tau^{-}$ and they are relatively well understood. Regular indecomposable modules are generally difficult to classify, however in the case of an affine quiver they have been described by Dlab-Ringel \cite{DR}. Specifically, it is shown that the set of regular indecomposable modules breaks up into a disjoint union of \textit{tubes}, which are uniserial subcategories that are closed under extensions. The set of tubes is indexed by the projective line $\mathbb{P}^1(k)$ and there are no non-trivial morphisms or extensions between different tubes. Moreover, each tube is stable under the Auslander-Reiten translation $\tau$. If $\tau(X)\cong X$ for all objects $X$ in a tube, then it is said to be \textit{homogeneous}. Otherwise, it is called \textit{non-homogeneous} and $\tau^{N}(X)\cong X$ for all objects $X$ in the tube for some period $N$. At most 3 tubes are non-homogeneous and the set of non-homogenous tubes together with their periods is called the \textit{tubular type} of $\vec{Q}$. Considered as categories, each tube contains a set of simple objects that are referred to as \textit{regular simple} objects. Note that these are not simple objects in $k\vec{Q}$-mod, however over an algebraically closed field $k$ they are \textit{bricks}, i.e. they satisfy End$(X)\cong k$. In homogeneous tubes, there is a unique regular simple object in dimension $\delta$ up to isomorphism. For non-homogeneous tubes, there are $N$ different isomorphism classes of regular simple objects whose dimension vectors correspond to real roots $\alpha < \delta$. Here $N$ is the period of $\tau$ acting on the tube. The functor $\tau$ acts transitively on the isomorphism classes of regular simple objects in a non-homogeneous tube and for any regular simple object $X$ there is an equality
\[\underline{dim}(X)+\underline{dim}(\tau(X))+\underline{dim}(\tau^2(X))+\cdots + \underline{dim}(\tau^{N-1}(X))=\delta\]
The fact that each tube is uniserial means that each regular indecomposable object has a unique composition series consisting of regular simple objects. Furthermore, it is known that each regular indecomposable is uniquely determined by the length of its composition series and its top. A non-homogeneous tube with period $N$ contains $N$ isomorphism classes of regular indecomposables in dimension $m\delta$ for any $m\in \mathbb{N}$.

The results of \cite{DR} can be formulated as proving the existence of a collection of fully faithful functors $\mathcal{C}_z:\mbox{Rep}^{nil}(C_{N_z})\rightarrow \mbox{Rep}(\vec{Q})$ parameterized by $z\in \mathbb{P}^1(k)$. Here $C_{N_z}$ is either the Jordan quiver for $N_z=1$ or the cyclic quiver of length $N_z$ for $N_z\ge 2$. Note that we only consider nilpotent representations of $C_{N_z}$, i.e. those for which there exists some $M$ such that the composition of any path of length $\ge M$ is zero. Equivalently, we consider representations which have composition series consisting of simple representations associated to the vertices of $C_{N_z}$. The image of each functor $\mathcal{C}_z$ is the tube indexed by $z\in \mathbb{P}^1(k)$. When $N_z=1$ the corresponding tube is homogeneous and when $N_z\ge 2$ it is non-homogeneous with period $N_z$. The images of the simple objects in $C_{N_z}$ are the corresponding tube's regular simple objects. Moreover, if the dimension of a representation $X\in \mbox{Rep}^{nil}(C_{N_z})$ is the first imaginary root of $C_{N_z}$, then the dimension of $\mathcal{C}_z(X)$ is the first imaginary root of $\vec{Q}$.

The parameterization of the tubes in Rep$(\vec{Q})$ by $\mathbb{P}^1(k)$ is fixed by a choice of extending vertex $e$. Explicitly, each tube contains a unique (up to isomorphism) indecomposable $X$ in dimension $\delta$ such that the top of $X$ is supported on $e$. These indecomposables are then parameterized by maps from the projective indecomposable $P(e)$ to a fixed indecomposable representation in dimension $\underline{dim}(P(e))+\delta$. This parameterization can be formulated in terms of a fully faithful functor $\mathcal{K}:\mbox{Rep}(K)\rightarrow \mbox{Rep}(\vec{Q})$, where $K$ is the Kronecker quiver. The functor $\mathcal{K}$ takes the simple object $P(1)\in \mbox{Rep}(K)$ to the projective indecomposable $P(e)\in \mbox{Rep}(\vec{Q})$.

For $\alpha$ an irregular root, let $P_{\alpha}$ denote the unique isomorphism class of indecomposables in dimension $\alpha$. For a non-homogeneous tube of period $N_z$, we can label the regular simple objects by elements $j\in \mathbb{Z}/N_z\mathbb{Z}$. Let $\{P_{z, j, 1}\}_{j\in \mathbb{Z}/N_z\mathbb{Z}}$ denote the isomorphism classes of regular simple objects in such a tube and let $P_{z, j, l}$ denote the unique isomorphism class of regular indecomposables with length $l$ and top isomorphic to $P_{z, j, 1}$. For a homogeneous tube indexed by $z$, let $P_{z, 1, l}$ denote the unique isomorphism class of indecomposables of length $l$.

\begin{proposition} (\cite{DR}) Let $\vec{Q}$ be an affine quiver with $n+1$ vertices. Then the set
\[\{P_{\alpha}\}_{\alpha\in \Delta^{irr}}\cup \{P_{z, j, l}\}_{z\in \mathbb{P}^1(k)}\]
is the full set of isomorphism classes of indecomposable representations of $\vec{Q}$. In particular, the set
\[\{P_{z, j, mN_z}\}_{z\in \mathbb{P}^1(k)}\]
is the full set of isomorphism classes of indecomposable representations in dimension $m\delta$. The periods $N_z$ satisfy the following identity:

\[\sum_{z\in \mathbb{P}^1(k)} (N_z-1) = n - 1\]

\end{proposition}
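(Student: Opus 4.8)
The plan is to read all three statements off the structural description of $k\vec{Q}$-mod recalled above; apart from the numerical identity, almost everything is a repackaging of \cite{DR}.

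First I would fix the classification of indecomposables. Every indecomposable $M$ is, by definition, preprojective, preinjective, or regular, and these classes are disjoint because the defect computation in the proof of Proposition 2.3 forces $\partial(\underline{dim}(M))$ to be $<0$, $>0$, or $=0$ respectively. A preprojective or preinjective indecomposable is built from a simple module by iterating $\tau^{-}$ or $\tau$; bookkeeping with these functors shows it is determined up to isomorphism by its dimension vector and that the dimension vectors so obtained are exactly the irregular roots, which gives the family $\{P_\alpha\}_{\alpha\in\Delta^{irr}}$. For regular indecomposables I invoke \cite{DR}: $\mathcal{R}$ is the disjoint union over $z\in\mathbb{P}^1(k)$ of the uniserial tubes $\mathrm{im}\,\mathcal{C}_z\cong\mathrm{Rep}^{nil}(C_{N_z})$, with no nonzero maps or extensions between distinct tubes; uniseriality means an indecomposable object of a tube has a unique composition series by regular simples, hence is determined by its length $l$ together with the isomorphism class of its top, which one labels by $j\in\mathbb{Z}/N_z\mathbb{Z}$. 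This gives $\{P_{z,j,l}\}$, and the two families together exhaust the indecomposables.

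For the second statement, note $\partial(m\delta)=m\,e(\delta,\delta)=0$ since $\delta\in rad(q)$, so an indecomposable of dimension $m\delta$ has zero defect, hence is neither preprojective nor preinjective, hence is regular and lies in a tube. In a tube of period $N_z$ the length-$l$ uniserial with top $P_{z,j,1}$ has dimension vector $\sum_{r=0}^{l-1}\underline{dim}(\tau^{r}P_{z,j,1})$, a sum of $l$ cyclically consecutive regular simple dimension vectors of that tube; since those $N_z$ vectors are nonzero and sum to $\delta$, any sub-sum of strictly fewer of them is $\le\delta$ but $\ne\delta$ componentwise, so it is not a positive multiple of $\delta$. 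Hence the dimension vector meets the ray $\mathbb{Z}\delta$ only when $N_z\mid l$, where it equals $(l/N_z)\delta$. So the indecomposables of dimension $m\delta$ are exactly the $P_{z,j,mN_z}$ — one for each of the $N_z$ choices of top in a period-$N_z$ tube, and a single one in each homogeneous tube.

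For the identity $\sum_z(N_z-1)=n-1$ I would pass to $\mathbb{Z}^{Q_0}/\mathbb{Z}\delta$, which carries the positive-definite Tits form of the finite root system $\overline{\Delta}$ obtained by deleting an extending vertex; since $\partial(\delta)=0$ the defect descends to this quotient, and its kernel $\overline{L}_0$ has rank $n-1$. Let $d_0,\dots,d_{N_z-1}$ be the dimension vectors of the regular simples of a non-homogeneous tube of period $N_z$. Since $\mathcal{C}_z$ is fully faithful with image closed under extensions, the Euler pairings $e(d_i,d_j)$ agree with those of the simple modules of the cyclic quiver $C_{N_z}$, so the symmetrized form on the $d_i$ is the Cartan matrix of type $\widetilde{A}_{N_z-1}$; reducing modulo $\delta$ — where $\sum_i d_i=\delta$ becomes $0$ — the images of $d_0,\dots,d_{N_z-1}$ form a sub-root-system of $\overline{\Delta}$ of type $A_{N_z-1}$, hence of rank $N_z-1$, lying in $\overline{L}_0$. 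Because distinct tubes admit no maps or extensions between them, these subsystems for different $z$ are mutually orthogonal, giving $\sum_z(N_z-1)\le n$. To obtain equality I would show they span $\overline{L}_0$ up to finite index: a root of $\overline{\Delta}$ in $\overline{L}_0$ is, up to sign, the class of $\delta-\beta$ for a positive root $\beta$ of $\overline{\Delta}$, and $\delta-\beta$ is then a positive real root $\le\delta$ of defect zero; by Kac's theorem \cite{K1} it is the dimension vector of an indecomposable, which has zero defect hence is regular, is not in a homogeneous tube, and (a longer uniserial of a tube of period $N_z$ having dimension componentwise $\ge\delta$, which $\delta-\beta$ is not, as $\beta\ne0$) is therefore a uniserial of length $<N_z$ in some non-homogeneous tube — so $\delta-\beta$ lies in the span of that tube's $d_i$. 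Thus the orthogonal $A_{N_z-1}$'s exhaust the roots of $\overline{\Delta}$ in $\overline{L}_0$, and their combined rank equals $\mathrm{rk}(\overline{\Delta}\cap\overline{L}_0)$. The step I expect to be the genuine obstacle is the remaining one: showing $\mathrm{rk}(\overline{\Delta}\cap\overline{L}_0)=n-1$, i.e. that the defect-zero roots span the whole corank-one lattice $\overline{L}_0$; should a uniform argument prove elusive, this is readily checked against the finitely many affine types $\widetilde{A}_n,\widetilde{D}_n,\widetilde{E}_6,\widetilde{E}_7,\widetilde{E}_8$ with their tubular types $(p,q),(2,2,n-2),(2,3,3),(2,3,4),(2,3,5)$.
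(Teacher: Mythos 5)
First, for calibration: the paper offers no proof of this proposition at all --- it is stated as a citation to \cite{DR}, and the surrounding text (tubes, uniseriality, regular simples summing to $\delta$) is itself quoted structure theory. So the only question is whether your argument stands on its own. Your first two assertions are fine: the trichotomy preprojective/regular/preinjective, the defect computation separating them, the determination of a regular indecomposable by its length and top, and the observation that a sum of $l$ cyclically consecutive regular-simple dimension vectors lies in $\mathbb{Z}\delta$ iff $N_z\mid l$ (since a proper consecutive sub-sum is $\le\delta$ but not equal to it) are all correct and are exactly how one reads the second claim off the Dlab--Ringel classification.

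The genuine gap is the one you flag yourself, and it is not a small one: the claim that the defect-zero roots of the finite system $\overline{\Delta}$ span a corank-one sublattice $\overline{L}_0$ is precisely the hard inequality $\sum_z(N_z-1)\ge n-1$, repackaged. It cannot be waved through on general grounds, because the roots of a finite root system lying in the kernel of an arbitrary linear functional need not span a corank-one sublattice (already in $A_2$ a generic functional has no roots in its kernel); one must use something specific to the defect, e.g.\ that $\partial$ takes the value $-1$ on $\underline{dim}\,P(e)$ via $\partial(\alpha)=-e(\alpha,\delta)$, and even then a uniform argument requires real work (this is why \cite{DR} ultimately determine the tubular types by explicit computation in each affine family). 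Your fallback --- checking $(p,q)$, $(2,2,n-2)$, $(2,3,3)$, $(2,3,4)$, $(2,3,5)$ against $n-1$ --- is legitimate and matches what the cited source actually does, but note that once you are willing to tabulate the tubular types, the identity is immediate from the table and the entire reduction through $\mathbb{Z}^{Q_0}/\mathbb{Z}\delta$, the orthogonal $A_{N_z-1}$ subsystems, and the lifting of defect-zero roots becomes scaffolding you do not need. (One small simplification in the part you did prove: since your orthogonal $A_{N_z-1}$'s all lie in $\overline{L}_0$, which has rank $n-1$, you get $\sum_z(N_z-1)\le n-1$ directly; the intermediate bound $\le n$ is weaker than what your own setup already gives.)
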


Setting $k=\mathbb{F}_q$, one obtains

\[\sum_{z\in \mathbb{P}^1(k)} N_z = q+n = q+\mbox{dim } \mathfrak{g}_{\delta}\]
From the statements above, we know that this is the total number of indecomposable representations in dimension $\delta$. Because $\delta$ is indivisible, this is also the number of absolutely indecomposable representations in this dimension and hence $a_{\delta}(q)= \mbox{dim } \mathfrak{g}_{\delta}+q$. This confirms Kac's conjecture for dimension $\delta$. There is a bijection between indecomposable (and hence, absolutely indecomposable) representations in dimension $m\delta$ and indecomposable representations in dimension $\delta$. Therefore, $a_{m\delta}(q)=a_{\delta}(q)$. Because dim $\mathfrak{g}_{\delta}=$ dim $\mathfrak{g}_{m\delta}$, Kac's conjecture follows in this case as well.

\section{Lusztig's nilpotent variety and $a_{\alpha}(0)$}

\subsection{Moduli of representations}

Let $k$ be an algebraically closed field. It is known that in this case any finite-dimensional basic $k$-algebra is isomorphic to a quotient $k\vec{Q}/I$ for some two-sided ideal $I\subset k\vec{Q}$. The pair $(\vec{Q},I)$ is called a \textit{quiver with relations}. Note that $K_0(\mbox{Rep}(k\vec{Q}))=K_0(\mbox{Rep}(k\vec{Q}/I))=\mathbb{Z}^{Q_0}$. For $\alpha\in \mathbb{N}^{Q_0}$, one can consider the affine variety

\[\mbox{Rep}(\vec{Q}, \alpha)=\bigoplus_{a\in {Q_1}} \mbox{Hom}(k^{s(a)}, k^{t(a)})\]
The group $GL(\alpha)=\Pi_{i\in {Q_0}} GL(\alpha_i)$ acts on Rep$(\vec{Q}, \alpha)$ by simultaneous conjugation. If $A\cong k\vec{Q}/I$, then we denote the subvariety of Rep$(\vec{Q}, \alpha)$ corresponding to representations of $A$ by Rep$(A, \alpha)$. The action of $GL(\alpha)$ preserves Rep$(A, \alpha)$ and its orbits correspond to isomorphism classes of $\alpha$-dimensional representations of $A$. The affine quotient Rep$(A, \alpha)/GL(\alpha):=\mbox{Spec}(k[\mbox{Rep}(A, \alpha)]^{GL(\alpha)})$ parameterizes closed orbits, which correspond to isomorphism classes of semisimple $\alpha$-dimensional representations. The quotient map Rep$(A, \alpha)\rightarrow \mbox{Rep}(A, \alpha)/ GL(\alpha)$ is a principal $GL(\alpha)$-bundle and it identifies representations with the same Jordan-H\"{o}lder series.

Following King \cite{Ki}, one can also consider the geometric invariant theory quotients of Rep$(A, \alpha).$ The only choice of line bundle in this case is the trivial bundle Rep$(A, \alpha)\times k$, however one can twist the action of $GL(\alpha)$ by a character to define different linearisations. Any character of $GL(\alpha)$ is determined by a weight $\Theta\in \mathbb{Z}^{Q_0}$ and is of the form
\[\chi(g)= \Pi_{i\in Q_0} (\mbox{det} (g_i))^{\Theta_i}\]
For a character $\chi:GL(\alpha)\rightarrow k^*,$ the twisted action of $GL(\alpha)$ on the line bundle Rep$(A, \alpha)\times k$ is given by $g\cdot (x, z) = (g\cdot x, \chi^{-1} (g) \cdot z)$. The geometric invariant theory quotient of Rep$(A, \alpha)$ with respect to this linearisation is defined by
\[\mbox{Rep}(A, \alpha)/_{\chi} GL(\alpha)= \mbox{Proj} (\oplus_{n\ge 0} k[\mbox{Rep}(A, \alpha)]^{(GL(\alpha), \chi^n)})\]
where $k[\mbox{Rep}(A, \alpha)]^{(GL(\alpha), \chi^n)} = \{ f\in k[\mbox{Rep}(A, \alpha)] | f(g\cdot x)=\chi^n(g)f(x) \mbox{ } \forall g\in GL(\alpha)\}.$

A representation $V\in \mbox{Rep}(A, \alpha)$ is said to be \textit{$\chi$-semistable} if there exists a semi-invariant polynomial $f\in k[\mbox{Rep}(A, \alpha)]^{(GL(\alpha), \chi^n)}$ for some $n\ge 0$ such that $f(V)\ne 0$. A $\chi$-semistable representation is said to be \textit{$\chi$-stable} if in addition the orbit $GL(\alpha)\cdot W$ is closed and is of dimension dim$(GL(\alpha))-1$. The set of $\chi$-semistable representations forms an open subset Rep$^{ss}(A, \alpha)$ of Rep$(A, \alpha)$ and the set of $\chi$-stable representations Rep$^s(A, \alpha)$ forms an open subset of Rep$^{ss}(A, \alpha)$. The quotient Rep$(A, \alpha)/_{\chi} GL(\alpha)$ parameterizes closed orbits in Rep$^{ss}(A, \alpha)$. The quotient map Rep$^{ss}(A, \alpha)\rightarrow \mbox{Rep}(A, \alpha)/_{\chi} GL(\alpha)$ is again a principal $GL(\alpha)$-bundle and it identifies points whose orbit closures intersect.

Now let $\Theta\in \mathbb{Z}^{Q_0}$ be a weight and let $\chi$ be the associated character. Define a bilinear form $\langle, \rangle$ on $\mathbb{Z}^{Q_0}$ by $\langle \alpha_i, \alpha_j \rangle = \delta_{ij}$ for $i, j\in Q_0.$ We say that a representation $V\in \mbox{Rep}(\vec{Q}, \alpha)$ is $\Theta$-\textit{semistable} (resp., $\Theta$-\textit{stable}) if $\langle \Theta, \underline{dim} (V') \rangle \le 0$ (resp. $\langle \Theta, \underline{dim}(V') \rangle <0$) for all proper subrepresentations $V'\subset V$. King has proved the following theorem:

\begin{theorem} (\cite{Ki}) A representation $V\in \mbox{Rep}(A, \alpha)$ is $\chi$-semistable (resp., $\chi$-stable) if and only if it is $\Theta$-semistable (resp., $\Theta$-stable).

\end{theorem}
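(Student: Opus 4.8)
The plan is to apply the Hilbert--Mumford numerical criterion to the action of $GL(\alpha)$ on the affine variety $\mbox{Rep}(A,\alpha)$ linearized by the character $\chi$, and to translate the resulting condition on one-parameter subgroups into the stated condition on subrepresentations. We may assume $\langle\Theta,\alpha\rangle=0$: on the $\chi$-side this is forced by applying the criterion to the one-parameter subgroup $t\mapsto t\cdot\mbox{Id}$, which acts trivially on $\mbox{Rep}(A,\alpha)$, and it is implicit in the notion of $\Theta$-(semi)stability. In this setting the criterion reads: $V$ is $\chi$-semistable if and only if $\langle\chi,\lambda\rangle\ge 0$ for every one-parameter subgroup $\lambda:\mathbb{G}_m\to GL(\alpha)$ for which $\lim_{t\to 0}\lambda(t)\cdot V$ exists, where $\langle\chi,\lambda\rangle$ is the integer with $\chi(\lambda(t))=t^{\langle\chi,\lambda\rangle}$; and $V$ is $\chi$-stable if and only if, moreover, $\langle\chi,\lambda\rangle>0$ for every such $\lambda$ not lying in the subgroup acting trivially on $\mbox{Rep}(A,\alpha)$. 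Since $\mbox{Rep}(A,\alpha)$ is closed and $GL(\alpha)$-stable inside the affine space $\mbox{Rep}(\vec{Q},\alpha)$, existence of the limit may be checked in the ambient space, and the limit, when it exists, automatically lands in $\mbox{Rep}(A,\alpha)$.

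The main step is to match one-parameter subgroups having a convergent limit with filtrations of $V$ by subrepresentations. Up to conjugation --- which changes neither $\langle\chi,\lambda\rangle$ nor the set of subrepresentations of $V$ --- a one-parameter subgroup $\lambda$ is a $\mathbb{Z}$-grading $k^{\alpha_i}=\bigoplus_m (k^{\alpha_i})^{(m)}$ of each vertex space; set $V_{\ge n}=\bigoplus_i\bigoplus_{m\ge n}(k^{\alpha_i})^{(m)}$. Then $\lim_{t\to 0}\lambda(t)\cdot V$ exists if and only if each $V_{\ge n}$ is a subrepresentation of $V$, in which case the limit is the associated graded representation $\bigoplus_n V_{\ge n}/V_{\ge n+1}$. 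Indeed, for an arrow $a$ with $s(a)=i$ and $t(a)=j$, the block of $\lambda(t)\cdot x_a$ sending $(k^{\alpha_i})^{(m)}$ into $(k^{\alpha_j})^{(n)}$ equals $t^{\,n-m}$ times the corresponding block of $x_a$, so convergence as $t\to 0$ is equivalent to $x_a$ having no blocks with $n<m$, i.e.\ to $x_a\bigl((V_i)_{\ge m}\bigr)\subseteq (V_j)_{\ge m}$ for every $m$, which is precisely the condition that every $V_{\ge n}$ be a subrepresentation. A determinant computation then gives $\chi(\lambda(t))=t^{-\sum_m m\langle\Theta,\underline{\dim}(V_{\ge m}/V_{\ge m+1})\rangle}$, and Abel summation --- after shifting the grading so that its lowest weight is $0$, which is harmless since $\langle\Theta,\alpha\rangle=0$ --- telescopes the exponent, so that $\langle\chi,\lambda\rangle=-\sum_{n\ge 1}\langle\Theta,\underline{\dim}\,V_{\ge n}\rangle$.

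With this dictionary in hand, both directions become formal. If $V$ is $\Theta$-semistable, then for every admissible $\lambda$ each $V_{\ge n}$ with $n\ge 1$ is either $0$ or a proper subrepresentation, hence $\langle\Theta,\underline{\dim}\,V_{\ge n}\rangle\le 0$, so $\langle\chi,\lambda\rangle\ge 0$ and $V$ is $\chi$-semistable. Conversely, given a nonzero proper subrepresentation $V'\subset V$, choose a $Q_0$-graded complement $V=V'\oplus W$ and let $\lambda$ act by $t$ on $V'$ and trivially on $W$; this $\lambda$ is admissible with $V_{\ge 1}=V'$, so the criterion forces $\langle\Theta,\underline{\dim}\,V'\rangle\le 0$, whence $\chi$-semistability implies $\Theta$-semistability. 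The stable case runs in parallel using the refined criterion: if $\langle\chi,\lambda\rangle=0$ while all summands $\langle\Theta,\underline{\dim}\,V_{\ge n}\rangle$ are $\le 0$, then each vanishes, so strict $\Theta$-stability forbids any nonzero proper $V_{\ge n}$ and forces $\lambda$ into the trivially acting subgroup; conversely, testing the two-step filtrations above upgrades the inequality to $\langle\Theta,\underline{\dim}\,V'\rangle<0$ for all nonzero proper $V'$.

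I do not expect an essential obstacle: apart from invoking the numerical criterion, the proof is a faithful bookkeeping exercise with no deep input. The two points that genuinely require care are the identification ``$\lim_{t\to 0}\lambda(t)\cdot V$ exists $\iff$ the $V_{\ge n}$ are subrepresentations of $V$'' (together with the observation that the limit then lies in $\mbox{Rep}(A,\alpha)$, which is where closedness enters) and the consistent tracking of sign and normalization conventions, so that the numerical inequality $\langle\chi,\lambda\rangle\ge 0$ translates exactly into the stated inequality $\langle\Theta,\underline{\dim}\,V'\rangle\le 0$.
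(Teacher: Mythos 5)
The paper offers no proof of this statement: it is quoted directly from King \cite{Ki}, so there is nothing internal to compare against. Your argument is, in substance, King's original one --- reduce to the Hilbert--Mumford numerical criterion for the character-twisted linearization of the trivial bundle on the affine variety $\mbox{Rep}(A,\alpha)$, identify one-parameter subgroups admitting a limit with filtrations of $V$ by subrepresentations, compute the pairing $\langle\chi,\lambda\rangle$ by Abel summation, and test two-step filtrations for the converse --- and the architecture is sound. You also correctly isolate the two genuinely delicate points: the limit/filtration dictionary (with closedness of $\mbox{Rep}(A,\alpha)$ in $\mbox{Rep}(\vec{Q},\alpha)$ ensuring the limit stays in it), and the normalization $\langle\Theta,\alpha\rangle=0$, which is in fact needed for the equivalence and is not explicit in the paper's definition of $\Theta$-semistability.

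The one step that does not follow from what precedes it is the sign in your formula for $\chi(\lambda(t))$. With the conventions you fix --- $\lambda(t)$ acting by $t^m$ on the weight-$m$ summand, so that the block of $\lambda(t)\cdot x_a$ from weight $m$ to weight $n$ is scaled by $t^{n-m}$ and the limit exists iff each $V_{\ge n}$ is a subrepresentation --- a direct computation gives $\chi(\lambda(t))=t^{+\sum_m m\langle\Theta,\underline{dim}(V_{\ge m}/V_{\ge m+1})\rangle}$, hence $\langle\chi,\lambda\rangle=+\sum_{n\ge 1}\langle\Theta,\underline{dim}\,V_{\ge n}\rangle$, without your minus sign; the criterion $\langle\chi,\lambda\rangle\ge 0$ then yields $\langle\Theta,\underline{dim}\,V'\rangle\ge 0$ for subrepresentations, which is King's convention, not the $\le 0$ appearing in the statement above. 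The tension is not of your making: the theorem as reproduced in this paper already differs by a sign from \cite{Ki}, where $\theta$-semistability requires $\theta(M')\ge 0$ on submodules for the same character $\prod_i\det(g_i)^{\Theta_i}$. The proof is correct once a single convention is enforced throughout (e.g.\ replace $\Theta$ by $-\Theta$ in the character, or state the inequality on quotients rather than subobjects); as written, the displayed formula for $\chi(\lambda(t))$ is the one place where the bookkeeping you rightly warn about actually slips.
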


We refer to a weight $\Theta\in \mathbb{Z}^{Q_0}=K_0(\mbox{Rep}(A))$ as a \textit{stability condition} on the category Rep$(A)$. Note that the choice $\Theta\in \mathbb{Z}^{Q_0}$ is equivalent to a choice of homomorphisms $\lambda:K_0(\mbox{Rep}(A))\rightarrow \mathbb{Z}$. As is clear from the theorem, different stability conditions determine different stable and semistable representations. However, we drop the prefix $\Theta$- when there is no confusion. It follows from the theorem that all simple representations are stable and all stable representations are indecomposable. For a fixed stability condition, the collection of semistable representations forms a full abelian subcategory of Rep$(A)$. The simple objects in this subcategory are the stable objects and each semistable object has a Jordan-H\"{o}lder filtration with stable composition factors. Two semistable representations are said to be \textit{S-equivalent} if their composition factors are the same. A semistable representation is said to be \textit{polystable} if it is a direct sum of stable representations. The polystable representations give the closed orbits in Rep$^{ss}(A, \alpha)$ and every semistable representation is S-equivalent to a polystable one. The quotient Rep$^{ss}(A, \alpha)/_{\chi} GL(\alpha)$ forms a coarse moduli space for families of semistable representations up to S-equivalence. Moreover, the open subset Rep$^s(A, \alpha)/_{\chi} GL(\alpha)$ forms a coarse moduli space for families of stable representations up to isomorphism. When the dimension vector $\alpha$ is indivisible, Rep$^s(A, \alpha)/_{\chi} GL(\alpha)$ is a fine moduli space.

The inclusion 
\[k[\mbox{Rep}(A, \alpha)]^{GL(\alpha)}\hookrightarrow \bigoplus_{n\ge 0} k[\mbox{Rep}(A, \alpha)]^{(GL(\alpha), \chi^n)}\]
induces a projective map 
\[\pi: \mbox{Rep}(A, \alpha)/_{\chi} GL(\alpha)\rightarrow \mbox{Rep}(A, \alpha)/GL(\alpha)\] 

In the case that $\vec{Q}$ has no oriented cycles, 
\[k[\mbox{Rep}(\vec{Q}, \alpha)]^{GL(\alpha)}=k\]
and the quotient Rep$(\vec{Q}, \alpha)/_{\chi} GL(\alpha)$ is projective. In general, the subscheme $\pi^{-1}(0)$ parameterizes semistable representations that are nilpotent. For $k=\mathbb{C},$ there is a homotopy equivalence between Rep$(A, \alpha)/_{\chi} GL(\alpha)$ and $\pi^{-1}(0).$

\subsection{Preprojective algebras and $a_{\alpha}(q)$}

Now consider the double quiver $\overline{Q}$. This is the quiver obtained from $\vec{Q}$ by adding a reverse arrow $a^*:j\rightarrow i$ for each arrow $a:i\rightarrow j\in Q_1.$ The affine variety Rep$(\overline{Q}, \alpha)$ is equipped with a symplectic form

\[\langle x, y \rangle = \sum_{a\in Q_1} tr [x_{a}, y_{a^*}]\]
and it can be viewed as the cotangent space of Rep$(\vec{Q}, \alpha)$. The moment map associated to the action of $GL(\alpha)$ is the map $\mu:\mbox{Rep}(\overline{Q}, \alpha)\rightarrow \Pi_{i\in Q_0} \mathfrak {gl}(\alpha_i)$ given by
\[\mu_i(x) = \sum_{a\in Q_1, s(a)=i} x_ax_{a^*} - \sum_{a\in Q_1, t(a)=i} x_{a^*}x_a \in \mathfrak{gl}(\alpha_i)\]
 for $i\in Q_0$. Associated to $\vec{Q}$ is the \textit{preprojective algebra} $\Pi_Q$ defined by

\[\Pi_Q=k\overline{Q}/\{\sum_{a\in Q_1, s(a)=i} f_af_{a^*} - \sum_{a\in Q_1, t(a)=i} f_{a^*}f_a\}\]
For any $\alpha\in \mathbb{N}^{Q_0}$, the representation space Rep$(\Pi_Q, \alpha)$ is the set $\mu^{-1}(0)\subset \mbox{Rep}(\overline{Q}, \alpha)$.
Let $\Lambda_{\alpha}$ denote the space of nilpotent representations in Rep$(\Pi_Q, \alpha)$. This is known as Lusztig's nilpotent variety. Lusztig has shown that $\Lambda_{\alpha}$ is a Lagrangian subvariety of the symplectic vector space Rep$(\overline{Q}, \alpha)$.

Given an element $\lambda\in \mathbb{Z}^{Q_0}$ one can also define the \textit{deformed preprojective algebra} $\Pi^{\lambda}_Q$ by
\[\Pi^{\lambda}_Q=k\overline{Q}/\{\sum_{a\in Q_1, s(a)=i} f_af_{a^*} - \sum_{a\in Q_1, t(a)=i} f_{a^*}f_a - \sum_{i\in Q_0} \lambda_i e_i\}\]
The weight $\lambda$ defines an element $(\lambda_i I_{\alpha_i}) \in \Pi_{i\in Q_0} \mathfrak{gl_i}(\alpha_i)$ and the representation space Rep$(\Pi^{\lambda}_Q, \alpha)$ can identified with the set $\mu^{-1}(\lambda)$. Moreover, $\lambda$ defines a stability condition on both $\Pi^{\lambda}_Q$-mod and $\Pi_Q$-mod. We say that $\lambda$ is \textit{generic} with respect to $\alpha\in \mathbb{N}^{Q_0}$ if $\langle \lambda, \alpha \rangle =0$ and $\langle \lambda, \beta \rangle \ne 0$ for all $0<\beta < \alpha.$ Such a stability condition always exists when $\alpha$ is indivisible. It is clear that no representations are strictly semistable with respect to a generic stability condition. Crawley-Boevey and Van den Bergh have proved the following:

\begin{theorem} (\cite{CBVdB}) Let $\alpha$ be indivisible and choose $\lambda$ to be generic with respect to $\alpha$. Set $k=\overline{\mathbb{F}_q}$ for $q=p^t$ and let $X_s=Rep(\Pi_Q, \alpha)/_{\lambda} GL(\alpha)$ and $X_{\lambda}=Rep(\Pi^{\lambda}_Q, \alpha)/_{\lambda} GL(\alpha)$. Then for $t\gg 0,$ we have

a) $a_{\alpha}(q) = q^{(\alpha, \alpha)-1} X_{\lambda}(\mathbb{F}_q)$

b) $X_{\lambda}(\mathbb{F}_q) = X_s(\mathbb{F}_q)$

\end{theorem}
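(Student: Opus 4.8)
The plan is to handle the two parts separately, reducing each to a point count. For (a), the idea is to evaluate $|\mathrm{Rep}(\Pi^{\lambda}_Q,\alpha)(\mathbb{F}_q)|=|\mu^{-1}(\lambda)(\mathbb{F}_q)|$ by a Fourier transform over $\mathbb{F}_q$ and then descend to the quotient $X_{\lambda}$. Fix a nontrivial additive character $\psi\colon\mathbb{F}_q\to\mathbb{C}^{\times}$ and, for $\xi=(\xi_i)\in\bigoplus_{i}\mathfrak{gl}(\alpha_i)(\mathbb{F}_q)$, set $\langle\xi,\eta\rangle=\sum_i\mathrm{tr}(\xi_i\eta_i)$. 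Since $\mu(x)-\lambda$ always lies in the hyperplane $\{\eta:\sum_i\mathrm{tr}\,\eta_i=0\}$ (here $\langle\lambda,\alpha\rangle=0$, part of genericity, is what makes solutions possible), the indicator of $\mu(x)=\lambda$ equals $q^{-\sum_i\alpha_i^2}\sum_{\xi}\psi(\langle\xi,\mu(x)-\lambda\rangle)$. Interchanging the sums over $x$ and $\xi$ and then summing first over the reversed arrows $x_{a^{*}}$: because $\langle\xi,\mu(x)\rangle$ is, for fixed $(x_a)$ and $\xi$, an $\mathbb{F}_q$-linear form in the $x_{a^{*}}$ that vanishes identically precisely when $\xi$ commutes with the $\vec{Q}$-representation $y:=(x_a)_{a\in Q_1}$, the sum over the $x_{a^{*}}$ collapses to a factor $q^{\sum_a\alpha_{s(a)}\alpha_{t(a)}}$ times the indicator $\mathbf{1}[\xi\in\mathrm{End}_{k\vec{Q}}(y)]$. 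Hence
\[
|\mu^{-1}(\lambda)(\mathbb{F}_q)|=q^{-q(\alpha)}\sum_{y\in\mathrm{Rep}(\vec{Q},\alpha)(\mathbb{F}_q)}\ \sum_{\xi\in\mathrm{End}(y)}\psi\!\Big(-\sum_i\lambda_i\,\mathrm{tr}\,\xi_i\Big).
\]

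The inner sum is $|\mathrm{End}(y)|$ if the linear functional $\xi\mapsto\sum_i\lambda_i\mathrm{tr}\,\xi_i$ vanishes on all of $\mathrm{End}(y)$, and $0$ otherwise; the key point — using that $\lambda$ is generic for $\alpha$ and $\alpha$ is indivisible — is that this happens exactly when $y$ is indecomposable. Indeed, if $y=y'\oplus y''$ nontrivially, the idempotent onto $y'$ lies in $\mathrm{End}(y)$ and pairs with $\lambda$ to $\langle\lambda,\underline{dim}\,y'\rangle\neq0$ since $0<\underline{dim}\,y'<\alpha$; conversely, if $y$ is indecomposable then $\mathrm{End}(y)$ is local, and since $y\otimes\mathbb{F}_{q^{r}}$ decomposes into $r$ Galois-conjugate indecomposables of dimension vector $\alpha/r$ its residue field $\mathbb{F}_{q^{r}}$ has $r\mid\gcd(\alpha)=1$, so $y$ is absolutely indecomposable and $\mathrm{End}(y)=\mathbb{F}_q\cdot\mathrm{id}_y\oplus\mathrm{rad}$; on the first summand the functional is $c\mapsto c\langle\lambda,\alpha\rangle=0$ and on the nilpotent part it is $0$. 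Grouping the indecomposable $y$ by isomorphism class, using $|\mathrm{End}(y)|=\frac{q}{q-1}|\mathrm{Aut}(y)|$ and the identification of isomorphism classes of indecomposables of dimension $\alpha$ with absolutely indecomposables, one gets $|\mu^{-1}(\lambda)(\mathbb{F}_q)|=q^{-q(\alpha)}\frac{q}{q-1}|GL(\alpha)(\mathbb{F}_q)|\,a_{\alpha}(q)$. On the other hand, genericity of $\lambda$ makes every point of $\mu^{-1}(\lambda)$ stable, so $PGL(\alpha)$ acts freely there with all stabilisers the scalars; as $PGL(\alpha)$ is connected, Lang's theorem shows every $\mathbb{F}_q$-point of $X_{\lambda}$ lifts, giving $|\mu^{-1}(\lambda)(\mathbb{F}_q)|=\frac{|GL(\alpha)(\mathbb{F}_q)|}{q-1}|X_{\lambda}(\mathbb{F}_q)|$. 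Comparing the two expressions and collecting powers of $q$ gives the identity of (a); the hypothesis $t\gg0$ is used only to keep the integral genericity of $\lambda$ intact after reduction mod $p$.

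For (b), for generic $\lambda$ both $X_{\lambda}$ and $X_s$ are smooth of dimension $2-(\alpha,\alpha)$, and both carry the purity and Tate-type properties typical of quiver varieties (cohomology concentrated in even degrees, with $H^{2j}$ pure of weight $2j$). Over $\mathbb{C}$ they are diffeomorphic: by the theory of quiver varieties they are hyperkähler rotations of one another, the rotation turning the complex-symplectic deformation parameter $\lambda$ into the Kähler parameter defining the GIT resolution $X_s$ — equivalently, Nakajima's statement that a quiver variety with generic parameter depends up to diffeomorphism only on the chamber of the parameter. Hence $X_{\lambda}$ and $X_s$ have equal Betti numbers. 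The plan is then to spread both out over a finitely generated subring of $\mathbb{C}$, invoke the purity and Tate type of their cohomology so that for $t\gg0$ each point count is a fixed polynomial in $q$, and observe that for a smooth variety $X$ of complex dimension $n$ with pure, even, Tate cohomology and $H^{2j}(X)$ of weight $2j$, Poincaré duality forces $|X(\mathbb{F}_q)|=\sum_j\dim H^{2j}(X)\,q^{\,n-j}$ — a quantity depending only on $n$ and the Betti numbers. Since $X_{\lambda}$ and $X_s$ share both, their point counts agree for $t\gg0$.

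The main obstacle is the geometric input in (b): pinning down the relation between $X_{\lambda}$ and $X_s$ precisely enough — through the hyperkähler rotation, or by exhibiting both as fibres of one ``deformation together with variation of GIT'' family over a connected generic base — and establishing that their cohomology is pure of Tate type, so that diffeomorphism-level information (Betti numbers) determines arithmetic-level information (point counts). Part (a), by contrast, is an elementary character computation once the trace-hyperplane normalisation and the powers of $q$ are tracked and the local-ring lemma for $\mathrm{End}(y)$ is in hand; its only real subtlety is passing from genericity of $\lambda$ over $\mathbb{Z}$ to genericity over $\mathbb{F}_q$, which is exactly what ``$t\gg0$'' provides.
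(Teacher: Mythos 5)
First, a structural point: the paper does not prove this theorem at all --- it is imported verbatim from \cite{CBVdB} --- so there is no internal argument to compare yours against; the relevant benchmark is Crawley-Boevey--Van den Bergh's own proof.

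Your part (a) is correct and is essentially their argument, recast through an additive character sum instead of their direct computation of the fibres of $\mu^{-1}(\lambda)\to\mathrm{Rep}(\vec{Q},\alpha)$; the two are the same computation, since the sum over $\xi$ exactly detects when the affine-linear system $\mu(x)=\lambda$ in the variables $x_{a^*}$ is solvable, which happens precisely when $\lambda$ annihilates $\mathrm{tr}\,\mathrm{End}(y)$. The supporting lemmas you invoke (locality of $\mathrm{End}(y)$, the divisibility $r\mid\gcd(\alpha)$ forcing absolute indecomposability, vanishing of the functional on the radical, $|\mathrm{End}(y)|=\tfrac{q}{q-1}|\mathrm{Aut}(y)|$, simplicity of every $\alpha$-dimensional $\Pi^{\lambda}_Q$-module for generic $\lambda$, and the Lang-theorem descent to $X_{\lambda}$) are all correct and are the ones used in \cite{CBVdB}. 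One bookkeeping remark: your computation produces $a_{\alpha}(q)=q^{\,e(\alpha,\alpha)-1}|X_{\lambda}(\mathbb{F}_q)|$, and this is the right normalisation (test $\alpha=\alpha_i$ or $\alpha=\delta$); with the paper's convention $(\alpha,\alpha)=2e(\alpha,\alpha)$, the exponent printed in the statement should be read as the Tits form minus one.

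Part (b) is where there is a genuine gap. You reduce the equality of point counts to (i) a diffeomorphism $X_{\lambda}\simeq X_s$ over $\mathbb{C}$ and (ii) purity and Tate type of the compactly supported cohomology of \emph{both} varieties over $\mathbb{F}_q$. Item (i) is available --- it is the content of Nakajima's appendix to \cite{CBVdB}, realising both as hyperk\"ahler quotients at parameters in one generic chamber. But (ii) is asserted (``typical of quiver varieties'') rather than proved, and it is precisely the content of part (b). For $X_s$ purity does follow from semiprojectivity (it is projective over the affine quotient $\mu^{-1}(0)/GL(\alpha)$, which carries a contracting $\mathbb{G}_m$-action). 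For the \emph{affine} variety $X_{\lambda}$, however, purity of $H^*_c$ as a Frobenius module is not a consequence of knowing its complex Betti numbers: a diffeomorphism over $\mathbb{C}$ carries no weight information about the variety over $\mathbb{F}_q$, so the implication ``equal Betti numbers $\Rightarrow$ equal point counts'' does not close. An independent arithmetic comparison is required --- for instance, relating $X_s$ and $X_{\lambda}$ as the special and generic fibres of the $\mathbb{G}_m$-equivariant family $\{\mu(x)=t\lambda\}^{\chi\mathrm{-ss}}/\!/GL(\alpha)\to\mathbb{A}^1$, which is how the comparison is organised in \cite{CBVdB}. Finally, your closing remark that $t\gg 0$ only safeguards genericity of $\lambda$ after reduction is not right: genericity mod $p$ is a condition on the prime $p$, not on $t$; the hypothesis $t\gg 0$ is what makes the spreading-out and comparison step in (b) legitimate.
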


Using the Grothendieck-Lefschetz formula and the Artin comparison theorem, one can then obtain Crawley-Boevey and Van den Bergh's formula for $a_{\alpha}(q)$. Over $k=\mathbb{C}$, we have a homotopy equivalence between $X_s$ and $\Lambda_{\alpha}/_{\lambda} GL(\alpha)$. It follows that the constant term $a_{\alpha}(0)$ counts the number of top-dimensional components in $\Lambda_{\alpha}/_{\lambda} GL(\alpha)$. Because $\Lambda_{\alpha}$ is Lagrangian and the map $\Lambda_{\alpha} \rightarrow \Lambda_{\alpha}/_{\lambda} GL(\alpha)$ is a principal $GL(\alpha)$-bundle, this is equal to the number of $\lambda$-semistable irreducible components in $\Lambda_{\alpha}$. Using slope semistability \cite{Re, Ru} and the Harder-Narasimhan filtration, together with the fact that $|Irr  \Lambda_{\alpha}|=\dim U(\mathfrak{n})_{\alpha}$ \cite{KaS, L3}, one can show that the number of $\lambda$-semistable irreducible components in $\Lambda_{\alpha}$ is equal to $\dim \mathfrak{n}_{\alpha}$.

\section{The algebra of BPS states}

\subsection{The Frenkel-Kac cocycle}

Now we specialize again to the case of an affine quiver. Let $k=\mathbb{C}$ and let $\Gamma$ be a finite subgroup of $SL(2,\mathbb{C})$. Associated to $\Gamma$ is an affine Dynkin diagram $\widehat{Q}$, as well as the ordinary Dynkin diagram $Q$ obtained by removing an extending vertex from $\widehat{Q}$. The affine quotient Rep$(\Pi_{\widehat{Q}}, \delta)/GL(\delta)$ is isomorphic to the Kleinian singularity $\mathbb{C}^2/\Gamma$. For generic stability conditions, the projective maps
\[\pi: \mbox{Rep}(\Pi_{\widehat{Q}}, \delta)/_{\chi} GL(\delta)\rightarrow \mbox{Rep}(\Pi_{\widehat{Q}}, \delta)/GL(\delta)\]
are crepant resolutions and the fiber $\pi^{-1}(0)$ is isomorphic to $\Lambda_{\delta}/_{\chi} GL(\delta)$. When $\chi$ is not generic, the moduli space Rep$(\Pi_{\widehat{Q}}, \delta)/_{\chi} GL(\delta)$ is singular and $\pi$ gives a partial resolution of $\mathbb{C}^2/\Gamma$. As described in the introduction, our goal will be to give the semistable irreducible components in $\Lambda_{\alpha}$ for various $\alpha\in \Delta^+(\widehat{Q})$ a Lie algebra structure isomorphic to $\mathfrak{n}$. We recall a presentation of $\mathfrak{n}$ due to Frenkel-Kac \cite{FK} that is more amenable to this question than the standard presentation using Serre relations.

Let $\vec{Q}=(Q_0, Q_1, s, t)$ be an affine quiver. For each positive real root $\alpha$, let $\mathfrak{n}_{\alpha}^{\epsilon}(\vec{Q})=\mathbb{C}\tilde{e}_{\alpha}$ be a one-dimensional vector space, and for each positive imaginary root $m\delta$, let $\mathfrak{n}_{m\delta}^{\epsilon}(\vec{Q})=\mathbb{C}[Q_0]/\mathbb{C}\delta$. For an element $\alpha\in \mathbb{C}[\vec{Q}_0]$, we let $\alpha(m)$ denote the corresponding element in $\mathfrak{n}^{\epsilon}_{m\delta}(\vec{Q})$. Define $\epsilon:\mathbb{Z}^{Q_0}\times \mathbb{Z}^{Q_0}\rightarrow \{\pm 1\}$ by
\[\epsilon(\alpha, \beta)=(-1)^{e(\alpha, \beta)}\]
The graded vector space
\[\mathfrak{n}^{\epsilon}(\vec{Q})=\bigoplus_{\alpha\in \Delta^+(\vec{Q})} \mathfrak{n}^{\epsilon}_{\alpha}(\vec{Q})\]
can then be given the structure of a Lie algebra by the following rules:
\[[\tilde{e}_{\alpha}, \tilde{e}_{\beta}]= \left\{ \begin{array}{lcr}
         \epsilon(\alpha, \beta)\tilde{e}_{\alpha+\beta} & \mbox{if $\alpha+\beta\in \Delta^{re}(\vec{Q})$}\\
        \epsilon(\alpha, \beta)\alpha(m) & \mbox{if $\alpha+\beta=m\delta$} \\
        0 & \mbox{ if } \alpha+\beta\notin \Delta^+(\vec{Q}) \end{array} \right. \]

\[[\alpha(m), \tilde{e}_{\beta}]=\epsilon(\alpha, \beta)e(\alpha, \beta)\tilde{e}_{\beta+m\delta}\]
\[[\alpha(m), \beta(n)]=0\]

Now let $\mathfrak{n}$ denote the positive part of the affine Lie algebra $\mathfrak{g}$ associated to $\vec{Q}$.

\begin{theorem} (\cite{FK}) The assignment
\[e_i\mapsto \tilde{e}_{\alpha_i}\]
extends to an isomorphism $\mathfrak{n}\xrightarrow{\sim} \mathfrak{n}^{\epsilon}(\vec{Q})$
\end{theorem}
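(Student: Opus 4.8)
The plan is to reduce the statement to a comparison of two explicit Lie algebra presentations, using the well-known structure of $\mathfrak{n}$ in terms of the loop realization. Recall that for an affine Lie algebra $\mathfrak{g}$ of type $X_n^{(1)}$ (the untwisted case, which is all that occurs for quivers) there is an isomorphism $\mathfrak{g}\cong (\mathfrak{g}_0\otimes \mathbb{C}[t,t^{-1}])\oplus \mathbb{C}K\oplus \mathbb{C}d$, where $\mathfrak{g}_0$ is the underlying finite-dimensional simple Lie algebra (the one attached to the Dynkin quiver obtained by deleting an extending vertex $e$). Under this identification the positive part $\mathfrak{n}$ decomposes as
\[
\mathfrak{n}=\bigl(\mathfrak{n}_0\otimes \mathbb{C}[t]\bigr)\;\oplus\;\bigl(\mathfrak{b}_0^-\otimes t\mathbb{C}[t]\bigr),
\]
where $\mathfrak{n}_0\subset\mathfrak{g}_0$ is the positive nilpotent part, $\mathfrak{h}_0$ the Cartan, and $\mathfrak{b}_0^-=\mathfrak{h}_0\oplus\mathfrak{n}_0^-$; the imaginary root space $\mathfrak{g}_{m\delta}$ for $m\ge 1$ is $\mathfrak{h}_0\otimes t^m$, which is $n$-dimensional, matching the statement. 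The first step is therefore to fix such a loop presentation and to record the bracket of $\mathfrak{g}_0$ in a Chevalley-type basis $\{e_\gamma,f_\gamma,h_i\}$ together with the standard structure constants $N_{\gamma,\gamma'}$.

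Next I would write down the comparison map. On real root spaces, both $\mathfrak{n}$ and $\mathfrak{n}^\epsilon(\vec Q)$ are one-dimensional in each graded piece, so there is essentially a unique candidate up to rescaling the basis vectors $\tilde e_\alpha$; on imaginary root spaces both are canonically $\mathbb{C}[Q_0]/\mathbb{C}\delta\cong\mathfrak{h}_0$, identified via the extending vertex $e$ (so that $\alpha(m)\leftrightarrow$ the image of $\alpha$ in $\mathfrak{h}_0\otimes t^m$ under the obvious splitting $\mathbb{C}[Q_0]=\mathbb{C}[Q_0\setminus\{e\}]\oplus\mathbb{C}e$ composed with the root lattice $\hookrightarrow\mathfrak{h}_0$). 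Define $\phi\colon\mathfrak{n}^\epsilon(\vec Q)\to\mathfrak{n}$ by sending $\tilde e_{\alpha}$ for $\alpha$ real to a suitably normalized root vector $e_{\alpha}$, and $\alpha(m)$ to the corresponding element of $\mathfrak{h}_0\otimes t^m$. The content is then to choose the normalization of the $e_\alpha$'s so that the three families of bracket relations defining $\mathfrak{n}^\epsilon(\vec Q)$ go over to the loop-algebra brackets. The key point is the cocycle identity: one must verify that the sign function $\epsilon(\alpha,\beta)=(-1)^{e(\alpha,\beta)}$ is, up to a coboundary, the asymmetry function governing the structure constants $N_{\gamma,\gamma'}$ of $\mathfrak{g}_0$ together with the extra signs produced by the Heisenberg-type $[\alpha(m),\tilde e_\beta]$ relations. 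This amounts to checking that $\epsilon$ is a $2$-cocycle on the root lattice with the correct commutator $\epsilon(\alpha,\beta)\epsilon(\beta,\alpha)=(-1)^{(\alpha,\beta)}$ — which is immediate from $e(\alpha,\beta)+e(\beta,\alpha)=(\alpha,\beta)$ — and then invoking the uniqueness of such a cocycle up to coboundary to match it with Frenkel-Kac's vertex-operator construction of $\mathfrak{g}_0$ (or of $\mathfrak{g}$ directly).

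Concretely the verification splits into cases by which clause of the bracket applies. For $\alpha+\beta$ real one uses that $e(\alpha,\beta)$ and the chain of roots through $\alpha,\beta$ determine $N_{\alpha,\beta}=\pm(p+1)$, but for simply-laced root systems $p+1=1$ whenever $\alpha+\beta$ is a root, so only the sign survives and $(-1)^{e(\alpha,\beta)}$ is exactly the sign coming from the cocycle. For $\alpha+\beta=m\delta$ one lands in $\mathfrak h_0\otimes t^m$, and the relation $[\tilde e_\alpha,\tilde e_\beta]=\epsilon(\alpha,\beta)\alpha(m)$ must be matched with $[e_\gamma\otimes t^a, e_{-\gamma}\otimes t^b]=h_\gamma\otimes t^{a+b}+\ldots$; this fixes the relative scaling of $\tilde e_\alpha$ and $\tilde e_{-\alpha+m\delta}$ and is where the choice $\alpha(m)\in\mathbb{C}[Q_0]$ rather than $h_\alpha$ matters, so one checks the dictionary $\alpha\mapsto h_\alpha$ is a lattice isomorphism $\mathbb{C}[Q_0]/\mathbb C\delta\xrightarrow{\sim}\mathfrak h_0$. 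The mixed relation $[\alpha(m),\tilde e_\beta]=\epsilon(\alpha,\beta)e(\alpha,\beta)\tilde e_{\beta+m\delta}$ is then forced by the Jacobi identity from the previous two, and $[\alpha(m),\beta(n)]=0$ reflects commutativity of $\mathfrak h_0\otimes\mathbb{C}[t]$. Finally one checks $\phi$ respects the Serre relations, equivalently that $\tilde e_{\alpha_i}\mapsto e_i$ generates, so $\phi$ is surjective; and it is injective because it is graded and an isomorphism on each (finite-dimensional) graded piece by the dimension count $\dim\mathfrak n_\alpha=1$ for $\alpha$ real and $=n$ for $\alpha$ imaginary. The main obstacle is the bookkeeping of signs in the cocycle comparison — making precise that $(-1)^{e(\alpha,\beta)}$ really is cohomologous to the standard Frenkel-Kac cocycle on the affine root lattice and tracking the compatible normalization of all the $\tilde e_\alpha$ simultaneously across the tower of imaginary roots; everything else is a direct unwinding of the loop realization.
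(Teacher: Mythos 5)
The paper does not actually prove this statement; it is quoted from Frenkel--Kac \cite{FK} and used as a black box, so there is no internal proof to compare against. Judged on its own, your outline is the standard argument via the homogeneous (loop) realization and is sound as a plan: the decomposition $\mathfrak{n}=(\mathfrak{n}_0\otimes\mathbb{C}[t])\oplus(\mathfrak{b}_0^-\otimes t\mathbb{C}[t])$, the identification $\mathbb{C}[Q_0]/\mathbb{C}\delta\cong\mathfrak{h}_0$ via $\alpha\mapsto h_\alpha$ modulo the center, the computation $\epsilon(\alpha,\beta)\epsilon(\beta,\alpha)=(-1)^{(\alpha,\beta)}$ from biadditivity of $e$, the fact that $N_{\gamma,\gamma'}=\pm 1$ in the simply-laced case, and the concluding graded dimension count are all correct. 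The one place where your roadmap hides real content is the cocycle comparison: $\epsilon$ is bimultiplicative on the affine lattice $\mathbb{Z}^{Q_0}$, and since $e(\delta,\cdot)=\partial$ is not identically zero it does not factor through the finite root lattice of $\mathfrak{g}_0$; writing $\alpha=\gamma+a\delta$, the $\delta$-shifts contribute signs $(-1)^{a\partial(\gamma')-b\partial(\gamma)}$ that must be absorbed into the normalizations $\tilde e_{\gamma+a\delta}\mapsto \eta(\gamma,a)\,e_\gamma\otimes t^a$. This is exactly the coboundary adjustment you invoke, and uniqueness of bimultiplicative $\{\pm1\}$-cocycles with prescribed commutator map up to coboundary does close the gap, but the adjustment has to be carried out on the affine lattice rather than reduced to $\mathfrak{g}_0$, so it is more than routine bookkeeping.

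It is also worth noting a shorter route that matches how the paper sets things up: since $\mathfrak{n}$ is \emph{defined} there as the free Lie algebra on the $e_i$ modulo the Serre relations, one can instead (i) verify by a finite computation with the bracket table that the $\tilde e_{\alpha_i}$ satisfy the Serre relations in $\mathfrak{n}^{\epsilon}(\vec{Q})$, obtaining a homomorphism $\mathfrak{n}\to\mathfrak{n}^{\epsilon}(\vec{Q})$; (ii) check that the $\tilde e_{\alpha_i}$ generate $\mathfrak{n}^{\epsilon}(\vec{Q})$, so the map is onto; and (iii) conclude by the same dimension count you use. Your approach buys an explicit dictionary between root vectors on both sides; the presentation-theoretic route avoids the sign bookkeeping entirely.
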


For any root $\alpha\in \mathbb{Z}^{Q_0}$, define
\[\xi(\alpha)=(-1)^{1+dim End(P)}\]
where $P$ is any indecomposable representation of $\vec{Q}$ in dimension $\alpha$. This is well-defined because each indecomposable $P$ in dimension $m\delta$ satisfies dim End$(P)=m$. Extend $\xi$ to all of $\mathbb{Z}^{Q_0}$ arbitrarily. One can twist $\epsilon$ to obtain a new cocycle $\epsilon^*$ by defining
\[\epsilon^*(\alpha, \beta)= \epsilon(\alpha, \beta)\xi(\alpha+\beta)\xi^{-1}(\alpha)\xi^{-1}(\beta)\]
The two cocycles $\epsilon$ and $\epsilon^*$ correspond to the same element in $H^2(\mathbb{Z}^{Q_0}, \mathbb{Z}/2\mathbb{Z})$. The Lie algebra $\mathfrak{n}^{\epsilon^*}(\vec{Q})$ obtained using $\epsilon^*$ in place of $\epsilon$ is isomorphic to $\mathfrak{n}^{\epsilon}(\vec{Q})$ via the map

\begin{align} 
\mathfrak{n}^{\epsilon}(\vec{Q})&\rightarrow \mathfrak{n}^{\epsilon^*}(\vec{Q})\nonumber \\
\tilde{e}_{\alpha}&\mapsto \xi(\alpha) \tilde{e}_{\alpha} \nonumber \\
\alpha(m)&\mapsto \xi(m\delta) \alpha(m) \nonumber
\end{align}

\subsection{The Hall algebras of $\mathbb{C}\vec{Q}$-mod and $\Pi_Q$-mod}

Let $\vec{Q}$ be an affine quiver. Define
\[\mathcal{L}_{\alpha}(\vec{Q})=M_{GL(\alpha)}(\mbox{Rep}(\vec{Q}, \alpha))\]
to be the set of $GL(\alpha)$-invariant, $\mathbb{C}$-valued constructible functions on Rep$(\vec{Q}, \alpha)$. One can define a multiplication on the graded vector space
\[\mathcal{L}(\vec{Q})=\bigoplus_{\alpha\in \mathbb{Z}^{\vec{Q}_0}} \mathcal{L}_{\alpha}(\vec{Q})\]
using a certain convolution product. For $f_{\alpha}\in \mathcal{L}_{\alpha}(\vec{Q})$ and $f_{\beta}\in \mathcal{L}_{\beta}(\vec{Q})$, define a constructible function 
\[f_{\alpha}\times f_{\beta}:\mbox{Rep}(\vec{Q}, \alpha)\times \mbox{Rep}(\vec{Q}, \beta)\rightarrow \mathbb{C}\]
by the rule
\[(f_{\alpha}\times f_{\beta})(x_1, x_2) = f_{\alpha}(x_1)f_{\beta}(x_2)\]
Let $Corr_1$ be the variety consisting of all pairs of representations $(V, W)$ such that $W\subset V$ with $V\in \mbox{Rep}(\vec{Q}, \alpha+\beta)$ and $W\in \mbox{Rep}(\vec{Q}, \alpha)$. Let $Corr_2$ be the variety consisting of all quadruples $(V, W, i, j)$ where $(V, W)\in Corr_1$ and the maps $i:\mathbb{C}^{\alpha}\rightarrow W$ and $j:\mathbb{C}^{\beta}\rightarrow \mathbb{C}^{\alpha}/W$ are isomorphisms. Define projections
\begin{align} p_2: Corr_2&\rightarrow Corr_1 \nonumber \\
(V, W, i, j)&\mapsto (V, W) \nonumber \\
\nonumber \\
p_3: Corr_1&\rightarrow \mbox{Rep}(\vec{Q}, \alpha+\beta) \nonumber \\
(V, W)&\mapsto V \nonumber \\
\nonumber\\
p_1:Corr_2&\rightarrow \mbox{Rep}(\vec{Q}, \alpha)\times \mbox{Rep}(\vec{Q}, \beta)\nonumber \\
(V,W, i, j) &\mapsto (V', V'') \nonumber
\end{align}
where $V(a) i_{t(a)}= i_{s(a)} V'(a)$ and $V(a) j_{t(a)} = j_{s(a)} V''(a)$ for all arrows $a\in Q_1$.
The convolution product of $f_{\alpha}$ and $f_{\beta}$ is then defined as
\[f_{\alpha}* f_{\beta}=(p_3)_{*}(p_2)_{\flat}(p_1)^*(f_{\alpha}\times f_{\beta})\]
Given three representations $X, Y$, and $Z$, define a complex variety
\[G_{X, Y}^Z=\{W\subset Z| W\cong X, Z/W\cong Y\}\]
Let $[X], [Y]$, and $[Z]$ denote the characteristic functions of $X, Y$, and $Z$. Then the multiplicity of $[Z]$ in the product $[X]*[Y]$ is the Euler characteristic $\chi(G_{X,Y}^Z)$.

The commutator $[f_{\alpha}, f_{\beta}]=f_{\alpha}*f_{\beta} - f_{\beta}*f_{\alpha}$ defines a Lie bracket on $\mathcal{L}(\vec{Q}).$ Let $M_{\vec{Q}}$ denote the Lie subalgebra of $\mathcal{L}(\vec{Q})$ generated by the characteristic functions $\{[S_i] | i\in Q_0\}.$ It can be shown that $f([M])=0$ for any constructible function $f\in M_{\vec{Q}}$ and any decomposable representation $M\in \mathbb{C}\vec{Q}$-mod (\cite{FMV1, Rie}). One can check that the characteristic functions $\{[S_i]\}$ satisfy the Serre relations
\[(ad([S_i]))^{1-c_{i, j}}[S_j]=0\]
for $i\ne j$.

For $\alpha\in \Delta^{re}(\vec{Q})$, let $E_{\alpha}$ denote the characteristic function of the indecomposable representations of $\vec{Q}$ in dimension $\alpha$. Fix an extending vertex $e\in Q_0$ and let $\alpha_0=\underline{dim}(P(e))$. The choice of extending vertex gives us a parameterization of the tubes in $Rep(\vec{Q})$. Let $z_1, \cdots, z_L\in \mathbb{CP}^1$ denote the points corresponding to the non-homogeneous tubes. Let $\alpha_{i, j}=\underline{dim}(P_{z_i, j, 1})$ and let $E_{i, j}(m)$ denote the characteristic function of the indecomposables $P_{z_i, j, mN_{z_i}}$. Finally, let $E_0(m)$ denote the characteristic function of the regular indecomposables in dimension $m\delta$ with top supported on $e$.

\begin{theorem} (\cite{FMV1}) The assignment
\begin{align} \tilde{e}_{\alpha}&\mapsto E_{\alpha} \nonumber \\ 
\alpha_{i, j}(m)&\mapsto E_{i, j}(m)-E_{i, j+1}(m) \nonumber \\
\alpha_0(m)&\mapsto -E_0(m) \nonumber
\end{align}
defines an isomorphism $\mathfrak{n}^{\epsilon^*}(\vec{Q})\xrightarrow{\sim} M_{\vec{Q}}$

\end{theorem}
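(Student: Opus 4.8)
The plan is to verify directly that the assignment respects the three families of bracket relations defining $\mathfrak{n}^{\epsilon^*}(\vec{Q})$: brackets among the $\tilde{e}_{\alpha}$ (real roots), brackets between an imaginary element $\alpha(m)$ and a real element $\tilde{e}_{\beta}$, and brackets between two imaginary elements. Since $M_{\vec{Q}}$ is \emph{defined} as the Lie subalgebra of $\mathcal{L}(\vec{Q})$ generated by the $[S_i]$, and by the Frenkel–Kac presentation (Theorem~4.1.1, after twisting by $\epsilon$ to $\epsilon^*$ as in Section~4.1) $\mathfrak{n}^{\epsilon^*}(\vec{Q})$ is generated by the $\tilde{e}_{\alpha_i}$ subject to the Serre relations, it suffices to (i) check that the proposed images of the $\tilde{e}_{\alpha_i}$ are the generators $[S_i]$, i.e. that $E_{\alpha_i}=[S_i]$, which holds since $S_i$ is the unique indecomposable in dimension $\alpha_i$; (ii) recall that the $[S_i]$ satisfy the Serre relations in $M_{\vec{Q}}$, which is stated in the excerpt; hence the assignment $\tilde{e}_{\alpha_i}\mapsto[S_i]$ extends to a surjective Lie algebra homomorphism $\phi:\mathfrak{n}^{\epsilon^*}(\vec{Q})\to M_{\vec{Q}}$; and (iii) show $\phi$ is injective, equivalently that $\dim (M_{\vec{Q}})_{\alpha}\ge \dim \mathfrak{n}_{\alpha}$ for every $\alpha\in\Delta^+(\vec{Q})$, and finally that the explicit formulas in the statement compute $\phi$ on the non-generator basis elements $\tilde{e}_{\alpha}$ (real $\alpha$) and $\alpha_{i,j}(m)$, $\alpha_0(m)$.

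For the dimension count in (iii): on the real-root graded pieces, $\dim\mathfrak{n}_{\alpha}=1$, and $(M_{\vec{Q}})_{\alpha}$ contains $E_{\alpha}\ne 0$ (the characteristic function of the unique indecomposable), so equality holds there once we know $\phi(\tilde{e}_\alpha)$ is a nonzero multiple of $E_\alpha$. On the imaginary graded pieces $m\delta$, $\dim\mathfrak{n}_{m\delta}=n$ (the quiver has $n+1$ vertices), while the elements $\{E_{i,j}(m)-E_{i,j+1}(m)\}$ over non-homogeneous tubes $i$ and $j\in\mathbb{Z}/N_{z_i}\mathbb{Z}$ together with $E_0(m)$ span a subspace of $(M_{\vec{Q}})_{m\delta}$; counting gives $\sum_i (N_{z_i}-1)+1 = (n-1)+1 = n$ by Proposition~2.2.8, and one checks these are linearly independent because the $E_{i,j}(m)$ and $E_0(m)$ are supported on representations in distinct tubes (no non-trivial morphisms between tubes) or with distinct tops, so the spanning set is a basis. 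This both bounds the dimension from below and identifies a basis on which to read off $\phi$.

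To pin down $\phi$ on these basis elements and thereby complete (iii), I would compute the relevant commutators in $\mathcal{L}(\vec{Q})$ via the formula that the multiplicity of $[Z]$ in $[X]*[Y]$ is $\chi(G^Z_{X,Y})$. Concretely: bracket $E_{\alpha}$ with $E_{\beta}$ for real roots $\alpha,\beta$ with $\alpha+\beta$ a real root (resp. $\alpha+\beta=m\delta$), reducing the Euler-characteristic computation to counting extensions inside a single tube (where the uniserial structure makes $G^Z_{X,Y}$ either a point or a projective line, contributing $1$ or $2$), and compare the resulting structure constants against the $\epsilon^*$-twisted Frenkel–Kac formulas $[\tilde e_\alpha,\tilde e_\beta]=\epsilon(\alpha,\beta)\tilde e_{\alpha+\beta}$ etc.; the twist $\xi(\alpha+\beta)\xi^{-1}(\alpha)\xi^{-1}(\beta)$ is exactly what accounts for the sign discrepancy between the abstract cocycle $\epsilon$ and the Hall-algebra convolution, via the $(-1)^{1+\dim\mathrm{End}(P)}$ appearing in Riedtmann-type integration formulas. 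Iterating, one gets that $E_{i,j}(m)-E_{i,j+1}(m)$ and $-E_0(m)$ lie in the image and realize $\alpha_{i,j}(m)$, $\alpha_0(m)$; combined with the dimension bound, $\phi$ is an isomorphism.

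I expect the main obstacle to be the sign bookkeeping in step (iii): matching the Hall-algebra structure constants, which come with signs from the functions $(p_2)_\flat$ and from orientation/Euler-form data, against the Frenkel–Kac cocycle $\epsilon^*$ on the nose. The geometric inputs — that $G^Z_{X,Y}$ for modules in a tube is a point or a $\mathbb{P}^1$, that distinct tubes do not interact, and the enumeration in Proposition~2.2.8 — are clean; it is the reconciliation of the twist $\xi$ with the Euler characteristics of these parameter spaces (the precise reason $\epsilon$ must be replaced by $\epsilon^*$) that requires care. One should reduce as much as possible to the Kronecker and cyclic-quiver subcategories via the functors $\mathcal{K}$ and $\mathcal{C}_z$, where the computation is essentially the $\widehat{\mathfrak{sl}}_2$ (resp. $\widehat{\mathfrak{sl}}_{N_z}$) case, and then transport along these fully faithful embeddings.
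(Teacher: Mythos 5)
The paper does not prove this statement: it is Theorem~5 here and is imported verbatim from \cite{FMV1}, with the surrounding text only recording the ingredients you also invoke (the Serre relations for the $[S_i]$, vanishing of elements of $M_{\vec{Q}}$ on decomposables, the definitions of $E_{\alpha}$, $E_{i,j}(m)$, $E_0(m)$). So there is no in-paper proof to compare against; judged on its own, your outline is the correct strategy and, as far as I can tell, the same one the cited source follows: a surjection $\mathfrak{n}^{\epsilon^*}(\vec{Q})\twoheadrightarrow M_{\vec{Q}}$ from the Serre presentation (Gabber--Kac), injectivity by exhibiting $\dim\mathfrak{n}_{\alpha}$ linearly independent elements in each graded piece using the tube decomposition and the count $\sum_z(N_z-1)=n-1$ of Proposition~2, and identification of the images of the Frenkel--Kac basis by computing Hall structure constants, reduced to the Kronecker and cyclic-quiver subcategories via $\mathcal{K}$ and $\mathcal{C}_z$. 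Two cautions about where the weight of the argument actually sits. First, your dimension bound requires knowing in advance that $E_{\alpha}$ (for every real root, including the regular ones $\alpha_{i,j,l}$) and the elements $E_{i,j}(m)$, $E_0(m)$ genuinely lie in $M_{\vec{Q}}$, i.e.\ are Lie polynomials in the $[S_i]$; this is not free and is itself part of the structure-constant computation, so steps (ii) and (iii) cannot be cleanly separated. Second, the parenthetical that the relevant $G^Z_{X,Y}$ are ``a point or a $\mathbb{P}^1$'' is optimistic: for brackets landing on regular indecomposables of length greater than one, and especially for the irregular-times-regular brackets producing $E_0(m)$, the parameter varieties and the sign bookkeeping against $\epsilon^*$ (the role of $\xi(\alpha)=(-1)^{1+\dim\mathrm{End}(P)}$) constitute most of the length of the actual proof. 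You flag both points yourself, so these are not gaps in the plan, only places where ``I would compute'' conceals the bulk of the work.
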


Now consider the category of representations of $\Pi_Q$. Define the convolution algebra $\mathcal{L}(\Pi_Q)$ analogously to the construction of $\mathcal{L}(\vec{Q})$, i.e. by replacing the representation spaces Rep$(\vec{Q}, \alpha)$ with the spaces Rep$^{nil}(\Pi_Q, \alpha)$, etc. Denote the subalgebra of $\mathcal{L}(\Pi_Q)$ generated by the characteristic functions $\{[S_i]\}$ by $H_Q.$ In this case, one can again check that the generators $\{[S_i]\}$ satisfy the Serre relations
\[(ad([S_i]))^{1-c_{i, j}}[S_j]=0\]
for $i\ne j$. Any function $f\in \mathcal{L}_{\alpha}(\Pi_Q)$ in $H_Q$ is constant on a dense open subset of each irreducible component in $\Lambda_{\alpha}$. Moreover, for each irreducible component $Z\in Irr \Lambda_{\alpha}$, there exists a function $f_Z\in H_Q\cap \mathcal{L}_{\alpha}(\Pi_Q)$ that takes the value $1$ on a dense open subset of $Z$ and $0$ on a dense open subset of each irreducible component $Z'\in Irr \Lambda_{\alpha}- \{Z\}$ (\cite{L3}). For each $\alpha\in \mathbb{Z}^{Q_0}$, the functions $\{f_Z\}_{Z\in Irr \Lambda_{\alpha}}$ form a basis for $H_Q\cap \mathcal{L}_{\alpha}(\Pi_Q)$.

Note that any representation $X\in \Pi_Q$-mod naturally determines a representation of $\vec{Q}$. This is obtained by letting all the ``reverse" arrows act as 0. Denote this map by $\rho:\Pi_Q \mbox{-mod}\rightarrow \mathbb{C}\vec{Q}\mbox{-mod}$. For a constructible function $f\in \mathcal{L}(\Pi_Q)$, we can consider its restriction to representations $X\in \Pi_Q$-mod satisfying $\rho(X)=X$. Identifying these representations with representations of $\vec{Q}$, we denote this new function by $\rho_*(f)\in \mathcal{L}(\vec{Q})$. Note that if $X\in \Pi_Q$-mod satisfies $\rho(X)=X$, then any subobject or quotient $Y$ of $X$ in $\Pi_Q$-mod must also satisfy $\rho(Y)=Y$. It follows that the map $\rho_*:\mathcal{L}(\Pi_Q)\rightarrow \mathcal{L}(\vec{Q})$ is an epimorphism. Moreover, one can show that $\rho_*$ restricts to an isomorphism $\rho_*:H_Q\xrightarrow{\sim} M_{\vec{Q}}$.

\subsection{Semistable construction of $\mathfrak{n}$}

In this section, we give a construction of $\mathfrak{n}$ using the semistable representations of $\Pi_Q$. We first study how the representations of $\vec{Q}$ lift to representations of $\Pi_Q$.

\begin{lemma} A short exact sequence
\[0\rightarrow X\rightarrow Z\rightarrow Y\rightarrow 0\]
in $\Pi_Q\mbox{-mod}$ with $\rho(X)=X$ and $\rho(Y)=Y$ naturally determines a short exact sequence
\[0\rightarrow X\rightarrow \rho(Z)\rightarrow Y\rightarrow 0\]
in $\mathbb{C}\vec{Q}\mbox{-mod}$.

\end{lemma}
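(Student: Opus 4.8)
The plan is to check that $\rho$ is an exact functor and then simply apply it to the given sequence, reading off the outer terms from the hypotheses. Recall first that a $\Pi_Q$-module $Z$ is a representation of the doubled quiver $\overline{Q}$ satisfying the preprojective relations, and that $\rho(Z)$ is the representation of $\vec{Q}$ with the same underlying vector spaces $Z_i$ ($i\in Q_0$) and the same action of the arrows $a\in Q_1$, the action of the reverse arrows $a^{*}$ being discarded. On a morphism $h$ of $\Pi_Q$-modules one sets $\rho(h)$ to be the same family of linear maps $(h_i)$; this is a morphism of $\mathbb{C}\vec{Q}$-modules because $h$ intertwines the action of every arrow of $\overline{Q}$, in particular of every $a\in Q_1$. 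Thus $\rho$ is a well-defined functor that leaves both the underlying graded vector spaces and the underlying linear maps unchanged.

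Next I would note that $\rho$ is exact. In each of the categories $\Pi_Q$-mod and $\mathbb{C}\vec{Q}$-mod (module categories over $\mathbb{C}\overline{Q}/(\text{relations})$ and over $\mathbb{C}\vec{Q}$ respectively) kernels, images and cokernels are computed vertex by vertex, so a sequence is exact if and only if it is exact at every vertex. Since $\rho$ does not touch this vertex-wise data, it carries the short exact sequence $0\to X\to Z\to Y\to 0$ to a sequence
\[0\to \rho(X)\to \rho(Z)\to \rho(Y)\to 0\]
that is again exact at every vertex, hence short exact in $\mathbb{C}\vec{Q}$-mod; its two maps are $\rho$ applied to the original maps, which is the sense in which the construction is canonical. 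Using the hypotheses $\rho(X)=X$ and $\rho(Y)=Y$ then identifies the outer terms and produces exactly the asserted sequence $0\to X\to \rho(Z)\to Y\to 0$.

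I do not expect a serious obstacle here. The one point worth flagging is that $Z$ itself need not satisfy $\rho(Z)=Z$, since the extension $Z$ may well have nonzero reverse-arrow action even when its sub- and quotient modules do not; so the actual content of the lemma is that the extension of $Y$ by $X$ survives, now as an extension in $\mathbb{C}\vec{Q}$-mod, after the reverse arrows are forgotten — which is precisely what exactness of $\rho$ provides. The only load-bearing ingredient is the standard fact that exactness of a sequence of quiver representations is detected pointwise at the vertices, together with the observation that $\rho$ preserves this pointwise data.
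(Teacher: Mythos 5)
Your proof is correct and follows the same route the paper intends: the paper's own proof is just the one-line remark that the lemma ``follows easily from the definitions,'' and your argument (that $\rho$ fixes the vertex-wise vector spaces and the $Q_1$-arrow actions, hence is an exact functor, so applying it to the sequence and using $\rho(X)=X$, $\rho(Y)=Y$ gives the claim) is exactly the expansion of that remark. Your closing observation that $Z$ itself need not satisfy $\rho(Z)=Z$ correctly identifies the only point of substance in the statement.
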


\begin{proof} This follows easily from the definitions of $\mathbb{C}\vec{Q}$-mod and $\Pi_Q$-mod.

\end{proof}

\begin{corollary}
If $\rho(X)=X, \rho(Y)=Y,$ and $[Z]$ appears in the Hall algebra product of $[X]$ and $[Y]$ in $H_{\vec{Q}}$, then $[\rho(Z)]$ appears in the product of $[X]$ and $[Y]$ considered inside $M_{\vec{Q}}$. Moreover, the multiplicity of $[\rho(Z)]$ in the second product is greater than or equal to the multiplicity of $[Z]$ in the first product.

\end{corollary}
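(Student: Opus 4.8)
The plan is to reinterpret both multiplicities as Euler characteristics of quiver Grassmannians and to compare them geometrically. By the formula recalled earlier, the multiplicity of $[Z]$ in the product $[X]*[Y]$ computed in $H_{\vec{Q}}$ is $\chi(G_{X,Y}^Z)$, where $G_{X,Y}^Z=\{W\subseteq Z : W\cong X,\ Z/W\cong Y\}$ is formed in $\Pi_Q$-mod; likewise the multiplicity of $[\rho(Z)]$ in the associated convolution product inside $M_{\vec{Q}}$ is $\chi(G_{X,Y}^{\rho(Z)})$, formed in $\mathbb{C}\vec{Q}$-mod. Thus the substantive assertion is the inequality $\chi(G_{X,Y}^{\rho(Z)})\geq \chi(G_{X,Y}^Z)$, and the appearance claim will then follow from positivity of the structure constants: each $\chi(G_{X,Y}^Z)$ is a non-negative integer, so ``$[Z]$ appears'' means $\chi(G_{X,Y}^Z)\geq 1$, whence $\chi(G_{X,Y}^{\rho(Z)})\geq 1$ and $[\rho(Z)]$ appears.

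First I would use the Lemma to produce a morphism $G_{X,Y}^Z\to G_{X,Y}^{\rho(Z)}$. Since $\rho(X)=X$, any $\Pi_Q$-submodule $W\subseteq Z$ with $W\cong X$ has the reverse arrows of $Z$ acting as zero on it; the hypothesis $\rho(Y)=Y$ forces the reverse arrows to act as zero on $Z/W$ as well, i.e. $\mbox{im}(x_{a^*})\subseteq W$. Applying $\rho$ exactly as in the Lemma shows that the \emph{same} graded subspace $W$ is a $\mathbb{C}\vec{Q}$-submodule of $\rho(Z)$ with $W\cong X$ and $\rho(Z)/W\cong Y$. Hence $W\mapsto W$ is a well-defined injection which identifies $G_{X,Y}^Z$ with the closed subvariety of $G_{X,Y}^{\rho(Z)}$ cut out by the Schubert-type conditions $\mbox{im}(x_{a^*})\subseteq W$ and $x_{a^*}(W)=0$ for every reverse arrow $a^*$, where $x_{a^*}$ denotes the fixed reverse-arrow maps of $Z$.

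To compare Euler characteristics I would exploit the $\mathbb{C}^*$-action on $\mbox{Rep}(\overline{Q},\alpha+\beta)$ scaling all reverse arrows, which preserves $\mbox{Rep}(\Pi_Q,\alpha+\beta)$ and carries $Z$ into a one-parameter family $Z_t$ of $\Pi_Q$-modules with $Z_0=\rho(Z)$, for which $G_{X,Y}^{Z_t}$ is independent of $t$ for $t\neq 0$ and equal to $G_{X,Y}^Z$. This realizes $\rho(Z)$ as a $\mathbb{G}_m$-degeneration of $Z$ and realizes the relative quiver Grassmannian over $\mathbb{A}^1$ as having generic fibre $G_{X,Y}^Z$ and special fibre $G_{X,Y}^{\rho(Z)}$, with the former the closed locus of the previous paragraph inside the latter. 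The desired inequality is then the upper semicontinuity of the Euler characteristic of the quiver Grassmannian under this degeneration; equivalently, the open complement $G_{X,Y}^{\rho(Z)}\setminus G_{X,Y}^Z$ of subspaces failing the reverse-arrow conditions must have non-negative Euler characteristic.

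Establishing this non-negativity is the crux and the main obstacle. The naive injection is by itself insufficient, since Euler characteristic is not monotone under arbitrary closed immersions; and because the scaling acts trivially on the special fibre, a bare fixed-point computation only recovers $\chi(\text{total space})=\chi(G_{X,Y}^{\rho(Z)})$ and the factor $\chi(\mathbb{C}^*)=0$ annihilates the generic-fibre contribution, giving no lower bound on $\chi(G_{X,Y}^Z)$. I expect the cleanest route is to construct a compatible affine paving: a Bialynicki-Birula-type cell decomposition of $G_{X,Y}^{\rho(Z)}$ adapted to the filtration of $\rho(Z)$ induced by the reverse arrows of $Z$, arranged so that the cells meeting $G_{X,Y}^Z$ form a sub-paving. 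Then $\chi(G_{X,Y}^{\rho(Z)})$ counts a collection of affine cells containing those that compute $\chi(G_{X,Y}^Z)$, and non-negativity on the complement is immediate. Combining the resulting inequality with positivity of the structure constants yields both the appearance statement and the multiplicity bound.
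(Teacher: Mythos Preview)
The paper gives no argument for this corollary beyond labelling it a consequence of Lemma~1; presumably the intended reasoning is precisely the observation in your second paragraph, that $W\mapsto W$ embeds $G_{X,Y}^{Z}$ as a closed subvariety of $G_{X,Y}^{\rho(Z)}$. You are right to flag that this closed embedding does not by itself yield the inequality $\chi(G_{X,Y}^{\rho(Z)})\geq\chi(G_{X,Y}^{Z})$, since compactly supported Euler characteristic is additive rather than monotone under closed immersion. Your diagnosis of why the naive $\mathbb{C}^*$-scaling argument collapses to a tautology is also correct. In this sense you have gone well beyond the paper in locating the actual content of the statement.

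That said, your proposal does not close the gap. You say you \emph{expect} a compatible affine paving of $G_{X,Y}^{\rho(Z)}$ in which the cells meeting $G_{X,Y}^{Z}$ form a sub-paving, but you do not construct one, and there is no general principle producing such pavings for arbitrary quiver Grassmannians of $\Pi_Q$-modules. Your appeal to ``positivity of the structure constants'' in the first paragraph is likewise unjustified: the paper never establishes that $\chi(G_{X,Y}^{Z})\geq 0$ for all $Z$, and this is not known in general for constructible Hall algebras, so ``$[Z]$ appears'' cannot simply be read as $\chi(G_{X,Y}^{Z})\geq 1$. The proof therefore remains incomplete. It is worth noting, however, that the only use the paper makes of this corollary is to control the \emph{support} of the functions $\widehat{E}_0(m)$ and $\widehat{E}_{i,j,l}$; for that purpose the much weaker assertion ``$G_{X,Y}^{Z}\neq\emptyset$ implies $G_{X,Y}^{\rho(Z)}\neq\emptyset$'' suffices, and this genuinely is immediate from the injection furnished by Lemma~1.
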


\begin{corollary} If $X\in \Pi_Q\mbox{-mod}$ has a composition series $0\subset F_1\subset F_2 \cdots \subset F_n \subset X$ such that $F_1=\rho(F_1), F_2/F_1=\rho(F_2/F_1), \cdots, F_n/F_{n-1}=\rho(F_n/F_{n-1})$, then the projection $\rho(X)$ has the same composition series. 

\end{corollary}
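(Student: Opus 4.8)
The plan is to avoid iterating the Lemma above and to use instead that $\rho$ is an \emph{exact} functor: it is restriction of scalars along the natural algebra map $\mathbb{C}\vec{Q}\to\Pi_Q$ that includes the forward arrows, or, concretely, it leaves the underlying $Q_0$-graded vector space and all forward-arrow maps untouched and merely discards the reverse-arrow structure. In particular $\rho$ carries submodules to submodules, quotients to quotients, and short exact sequences to short exact sequences. (This is also the reason the Lemma above holds: apply $\rho$ to the given short exact sequence and use $\rho(X)=X$, $\rho(Y)=Y$.)

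First I would apply $\rho$ to the chain $0\subset F_1\subset\cdots\subset F_n\subset X$ of $\Pi_Q$-submodules, obtaining a chain $0\subset\rho(F_1)\subset\cdots\subset\rho(F_n)\subset\rho(X)$ of $\mathbb{C}\vec{Q}$-submodules of $\rho(X)$; the inclusions stay strict because $\rho$ does not change underlying vector spaces, and an inclusion $F_{i-1}\subsetneq F_i$ of $\Pi_Q$-modules is necessarily strict at some vertex, and similarly $\rho(F_n)\subsetneq\rho(X)$. Next, exactness of $\rho$ (equivalently, a vertex-by-vertex inspection of the definition) yields canonical identifications $\rho(F_i)/\rho(F_{i-1})\cong\rho(F_i/F_{i-1})$ and $\rho(X)/\rho(F_n)\cong\rho(X/F_n)$. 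By hypothesis the successive quotients $F_i/F_{i-1}$ (and $X/F_n$) are fixed by $\rho$, so these subquotients are literally $F_1,F_2/F_1,\dots,F_n/F_{n-1},X/F_n$; moreover each is simple over $\mathbb{C}\vec{Q}$ whenever it is simple over $\Pi_Q$, since a $\mathbb{C}\vec{Q}$-submodule of a module on which the reverse arrows vanish is automatically $\Pi_Q$-stable. Thus $0\subset\rho(F_1)\subset\cdots\subset\rho(F_n)\subset\rho(X)$ is a composition series of $\rho(X)$ with exactly the same factors as the given composition series of $X$.

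The one point that needs care — and the reason this is not a term-by-term application of the Lemma above — is that the intermediate terms $F_i$ need \emph{not} satisfy $\rho(F_i)=F_i$: a non-split extension of $\rho$-fixed modules by $\rho$-fixed modules can have reverse arrows shifting the filtration downward, so the Lemma above does not apply verbatim to the steps $0\to F_{i-1}\to F_i\to F_i/F_{i-1}\to 0$. Routing the argument through the exactness of $\rho$ sidesteps this. One could alternatively induct on $n$, peeling off $F_1$ (which \emph{is} $\rho$-fixed by hypothesis), using $\rho(X)/\rho(F_1)=\rho(X)/F_1=\rho(X/F_1)$, applying the inductive hypothesis to $X/F_1$ with its induced length-$(n-1)$ filtration, and pulling the result back along $\rho(X)\twoheadrightarrow\rho(X/F_1)$; but the direct route is cleaner. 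I do not expect any genuine obstacle beyond this bookkeeping.
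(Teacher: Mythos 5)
Your proof is correct, and it deliberately reorganizes the paper's argument rather than reproducing it. The paper disposes of the corollary in one line: the case of a two-step filtration is Lemma 1, and ``the general result follows by induction.'' You instead apply the exactness of $\rho$ to the whole filtration at once: since $\rho$ is restriction of scalars along $\mathbb{C}\vec{Q}\to\Pi_Q$ (equivalently, it preserves the underlying graded vector spaces and all forward-arrow maps), it carries the chain $0\subset F_1\subset\cdots\subset F_n\subset X$ to a chain of $\mathbb{C}\vec{Q}$-subrepresentations of $\rho(X)$ with subquotients $\rho(F_i)/\rho(F_{i-1})\cong\rho(F_i/F_{i-1})=F_i/F_{i-1}$, and these remain simple because a module with vanishing reverse arrows has the same submodule lattice over $\Pi_Q$ and over $\mathbb{C}\vec{Q}$. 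This is really the same underlying fact as Lemma 1 (whose own proof is exactly this exactness), so the two arguments are close in substance; but the subtlety you flag is genuine and worth recording: the intermediate terms $F_i$ need not satisfy $\rho(F_i)=F_i$, so Lemma 1 cannot be applied term by term to the sequences $0\to F_{i-1}\to F_i\to F_i/F_{i-1}\to 0$, and the paper's ``induction'' must be set up as you indicate at the end --- peeling off $F_1$, passing to $X/F_1$ with its induced filtration, and using $\rho(X)/F_1\cong\rho(X/F_1)$ --- rather than by inducting up the subobjects. Your direct route via exactness packages this cleanly and is, if anything, more transparent than the paper's.
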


\begin{proof} For $n=2$, the statement follows from Lemma 1. The general result follows by induction.

\end{proof}

Let $\mathcal{A}$ be an abelian category, let $F:\mathcal{A}\rightarrow \mathcal{A}$ be an endofunctor, and let $X\in Ob(\mathcal{A})$ be an object. The orbit algebra of $\mathcal{O}^F(X)$ is defined by
\[\mathcal{O}^F(X)=\oplus_{n\ge 0} \mbox{Hom}(F^n(X), X)\]
with multiplication given by
\[f*g=f\circ F^s(g)\]
for $f\in \mbox{Hom}(F^s(X), X)$ and $g\in \mbox{Hom}(F^t(X), X)$. Define
\[\mathcal{N}^F(X)=\{f\in \mbox{Hom}(F(X), X)| f\mbox{ is nilpotent in }\mathcal{O}^F(X)\}\]
The following can be found in \cite{Ri}:
\begin{proposition} (\cite{Ri}) If $X\in \mathbb{C}\vec{Q}$-mod, then $\rho^{-1}(X)\cong \mathcal{N}^{\tau}(X)$ for $\tau$ the Auslander-Reiten translation of $\mathbb{C}\vec{Q}$-mod.

\end{proposition}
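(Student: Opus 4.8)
The plan is to transport the statement across the tensor-algebra presentation of $\Pi_Q$, so that a $\Pi_Q$-module structure on a fixed $\mathbb{C}\vec{Q}$-module $X$ becomes a single $\mathbb{C}\vec{Q}$-linear map $\phi\colon\tau X\to X$, and then to recognize when such a map comes from a \emph{nilpotent} module. Since $\mathbb{C}\vec{Q}$ is hereditary, one has an isomorphism of algebras $\Pi_Q\cong T_{\mathbb{C}\vec{Q}}(B)$, where $B=\mathrm{Ext}^1_{\mathbb{C}\vec{Q}}(D(\mathbb{C}\vec{Q}),\mathbb{C}\vec{Q})$ is the $\mathbb{C}\vec{Q}$-bimodule governing the Auslander--Reiten translate, so that $B\otimes_{\mathbb{C}\vec{Q}}M$ agrees with a translate of $M$ up to a summand supported on the projective/injective summands of $M$. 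By the universal property of the tensor algebra, giving $X$ a compatible $\Pi_Q$-action (i.e.\ a point of $\rho^{-1}(X)$, which is the slice of $\mu^{-1}(0)$ over $X$ cut out by the nilpotency condition) amounts to a single $\mathbb{C}\vec{Q}$-linear map $B\otimes_{\mathbb{C}\vec{Q}}X\to X$; after the identification above this is exactly a homomorphism $\phi\colon\tau X\to X$. Tracking the $\mathbb{C}$-coordinates, the assignment sending the reverse-arrow data to $\phi$ is polynomial with polynomial inverse, so it realizes $\rho^{-1}(X)$ as a Zariski-closed subvariety of $\mathrm{Hom}_{\mathbb{C}\vec{Q}}(\tau X,X)$.

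It remains to identify that subvariety with $\mathcal{N}^{\tau}(X)$. First one checks that, under the dictionary above, the action on $X$ of the degree-$n$ homogeneous part of $\Pi_Q$ is the $n$-fold product $\phi\ast\cdots\ast\phi\in\mathrm{Hom}_{\mathbb{C}\vec{Q}}(\tau^{n}X,X)$ computed in the orbit algebra $\mathcal{O}^{\tau}(X)$ --- this is the bookkeeping fact that iterating $B^{\otimes n}\otimes_{\mathbb{C}\vec{Q}}X\to\cdots\to B\otimes_{\mathbb{C}\vec{Q}}X\to X$ reproduces the multiplication rule $f\ast g=f\circ\tau^{|f|}(g)$ of $\mathcal{O}^{\tau}(X)$. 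Next, because $\vec{Q}$ has no oriented cycles, a finite-dimensional $\Pi_Q$-module $Z$ is nilpotent if and only if some power of the augmentation ideal of $\Pi_Q$ annihilates it: a sufficiently long path in $\overline{Q}$ either uses many reverse arrows, hence lies in high homogeneous degree, or else contains a long run of forward arrows, hence acts as $0$ by acyclicity of $\vec{Q}$. Hence $Z$ is nilpotent precisely when its degree-$n$ action vanishes for all $n\gg 0$, i.e.\ when $\phi^{\ast n}=0$ in $\mathcal{O}^{\tau}(X)$ for $n\gg 0$, i.e.\ when $\phi\in\mathcal{N}^{\tau}(X)$. Combining the two paragraphs gives the isomorphism of affine varieties $\rho^{-1}(X)\xrightarrow{\ \sim\ }\mathcal{N}^{\tau}(X)$.

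The hard part is the input of the first paragraph: producing the tensor-algebra isomorphism $\Pi_Q\cong T_{\mathbb{C}\vec{Q}}(B)$ and, above all, pinning down $B\otimes_{\mathbb{C}\vec{Q}}X$ in terms of $\tau X$ with the correct treatment of the projective and injective summands of $X$, so that the assignment (reverse-arrow data) $\mapsto\phi$ genuinely takes values in $\mathrm{Hom}_{\mathbb{C}\vec{Q}}(\tau X,X)$ and is bijective. This is the genuinely hereditary step --- it rests on $\mathrm{gl.dim}\,\mathbb{C}\vec{Q}\le 1$ and the resulting explicit two-term projective presentation through which $\tau$ is computed --- and is the content of \cite{Ri}. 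The remaining points, namely that the induced $\Pi_Q$-action is literally multiplication in $\mathcal{O}^{\tau}(X)$ and that the constructions above are morphisms of varieties and not merely bijections of sets, are then routine.
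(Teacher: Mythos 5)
The paper gives no proof of this proposition at all---it is quoted directly from \cite{Ri}---and your sketch is a faithful outline of Ringel's argument there: the tensor-algebra presentation $\Pi_Q\cong T_{\mathbb{C}\vec{Q}}(\mathrm{Ext}^1(D(\mathbb{C}\vec{Q}),\mathbb{C}\vec{Q}))$, the identification of the fibre of $\rho$ over $X$ with homomorphisms between $X$ and its Auslander--Reiten translate, and the matching of module nilpotency with nilpotency in the orbit algebra via acyclicity of $\vec{Q}$. Since you, like the paper, ultimately defer the tensor-algebra isomorphism and the identification $B\otimes_{\mathbb{C}\vec{Q}}X\cong\tau^{-}X$ to \cite{Ri}, your write-up is at the same level of rigor as the paper's own treatment; the one point to watch is variance, since the moment-map fibre is naturally $D\mathrm{Ext}^1(X,X)\cong\mathrm{Hom}(\tau^{-}X,X)\cong\mathrm{Hom}(X,\tau X)$, which must be reconciled with the $\mathrm{Hom}(\tau X,X)$ used in the definition of $\mathcal{N}^{\tau}(X)$.
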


Now we consider different stability conditions on the categories $\mathbb{C}\vec{Q}$-mod and $\Pi_Q$-mod. Note that any homomorphism $\lambda:K_0(\mathbb{Z}^{Q_0})\rightarrow \mathbb{Z}$ defines a stability condition on both categories. Moreover, if $X\in \mathbb{C}\vec{Q}$-mod is $\lambda$-semistable (respectively, $\lambda$-stable) then it is $\lambda$-semistable (respectively, $\lambda$-stable) when considered as a $\Pi_Q$-module. A root $\alpha\in \Delta(\vec{Q})$ is said to be a \textit{Schur root} if there exists a brick $X\in \mathbb{C}\vec{Q}$-mod such that $\underline{dim}(X)=\alpha$. Because $\mathbb{C}\vec{Q}$-mod is hereditary, the moduli space Rep$^s(\mathbb{C}\vec{Q}, \alpha)/_{\lambda} GL(\alpha)$ is non-empty for some stability condition $\lambda$ if and only if $\alpha$ is a Schur root (\cite{Ki}). In the case that $\alpha$ is a Schur root, any indecomposable representation in dimension $\alpha$ is stable with respect to the \textit{canonical character} (\cite{Scho})
\begin{align} \lambda_{\alpha}:K_0(\mathbb{Z}^{Q_0})&\rightarrow \mathbb{Z} \nonumber \\ 
\underline{dim}(V) &\mapsto e(\alpha, \underline{dim}(V))- e(\underline{dim}(V), \alpha)\nonumber
\end{align}
It is easy to see that each preprojective and preinjective indecomposable representation of $\vec{Q}$ is a brick. It follows from \cite[Theorem 1.1]{CB} that for any $\alpha\in \Delta^{irr}(\vec{Q})$, all $\lambda_{\alpha}$-stable $\Pi_Q$-modules in dimension $\alpha$ are isomorphic to $P_{\alpha}$.

For regular roots, we define a stability condition
\begin{align} \lambda_{reg}:K_0(\Pi_Q\mbox{-mod})&\rightarrow \mathbb{Z} \nonumber \\ 
\underline{dim}(V)&\mapsto e(\delta, \underline{dim}(V))\nonumber
\end{align}
By Proposition 1, each regular $\mathbb{C}\vec{Q}$-module is $\lambda_{reg}$-semistable. Moreover, by Proposition 3, each regular simple object in $\mathbb{C}\vec{Q}$-mod lifts to a unique $\Pi_Q$-module. These are the $\lambda_{reg}$-stable objects in $\mathbb{C}\vec{Q}$-mod and $\Pi_Q$-mod. It follows from Corollary 2 that the $\lambda_{reg}$-semistable objects in $\Pi_Q$-mod are exactly the modules $X$ such that $\rho(X)$ is a regular $\mathbb{C}\vec{Q}$-module. Moreoever, any such $X$ is S-equivalent to $\rho(X)$.

In \cite{FMV2}, the authors choose a generic stability condition for the dimension $\delta$. They verify that a statement analogous to Theorem 1 holds in type $A_1^{(1)}$. In this case, there are no non-homogeneous tubes, and their choice of stability conditions coincides with ours. In general, $\lambda_{reg}$ is not generic, and in fact it lies on the maximal number of walls. For types $A_2^{(1)}$, $A_3^{(1)}$, and $D_4^{(1)}$, Theorem 1 still holds when $\lambda_{reg}$ is replaced by a generic stability condition. However, for quivers with tubes of rank $\ge 3$, our choice of stability condition is necessary to ensure an isomorphism $\mathfrak{n}\xrightarrow{\sim} H^{ss}_{\vec{Q}}$. We now give a proof of Theorem 1:

\begin{proof}
Let $H^{ss}_{\vec{Q}}$ denote the subspace of $H_{\vec{Q}}$ spanned by constructible functions supported on semistable irreducible components. Semistability is defined here using the stability conditions $\lambda_{reg}$ for regular roots and $\lambda_{\alpha}$ for each irregular root $\alpha$. A Lie bracket is defined on $H^{ss}_{\vec{Q}}$ by restriction of the bracket in $H_{\vec{Q}}$. We follow a similar strategy to \cite{FMV1} to construct an isomorphism of Lie algebras $\Phi:\mathfrak{n}^{\epsilon^*}(\vec{Q})\xrightarrow{\sim} H^{ss}_{\vec{Q}}$. This map lifts the isomorphism $\Xi:\mathfrak{n}^{\epsilon^*}(\vec{Q})\xrightarrow{\sim} M_{\vec{Q}}$ in the sense that $\rho_*\circ \Phi = \Xi$.

For $\alpha$ an irregular root, let $\widehat{E}_{\alpha}$ be the characteristic function of the isomorphism class of indecomposable representations in dimension $\alpha$ and define $\Phi(e_{\alpha})=\widehat{E}_{\alpha}$. If $\alpha$ and $\beta$ are two irregular roots such that $\alpha+\beta\in \Delta^{irr}(\vec{Q})$, then it is clear that
\[\Phi([e_{\alpha}, e_{\beta}])=[\Phi(e_{\alpha}), \Phi(e_{\beta})]\]

Fix an extending vertex $e$ and let the points $z_1, \cdots, z_L\in \mathbb{P}^1(\mathbb{C})$ index the non-homogeneous tubes in $\mathbb{C}\vec{Q}$-mod. Let $\widehat{E}_{i, j}$ denote the characteristic function of the isomorphism class of regular simple objects $P_{z_i, j, 1}$ considered as representations of $\Pi_Q$ and let $\alpha_{i, j}=\underline{dim}(P_{z_i, j, 1})$. In \cite{FMV1} it is shown that the corresponding elements $E_{i, j}=\rho_*(\widehat{E}_{i, j})$ in $M_{\vec{Q}}$ can be obtained by taking Lie brackets of elements $E_{\alpha}$ for irregular roots $\alpha$. Extend $\Phi$ by setting $\Phi(e_{\alpha_{i, j}})=\widehat{E}_{i, j}$. 

In non-homogeneous tubes indexed by $z_1, \cdots, z_L$, the characteristic functions of each isomorphism class $P_{z_i, j, l}$ can be uniquely expressed as Lie polynomials in $M_{\vec{Q}}$ in the elements $E_{i, j}$. Denote the corresponding Lie polynomials in $H^{ss}_{\vec{Q}}$ by $\widehat{E}_{i, j, l}$ and set $\widehat{E}_{i, j}(m)=\widehat{E}_{i, j, mN_{z_i}}$. We extend $\Phi$ by setting $\Phi(e_{\alpha_{i, j, l}})=\widehat{E}_{i, j, l}$ and $\Phi(\alpha_{i, j}(m))=\widehat{E}_{i, j}(m)-\widehat{E}_{i, j+1}(m)$, where $\alpha_{i, j, l}=\underline{dim}(P_{z_i, j, l})$. The elements $\widehat{E}_{i, j, l}$ are well-defined because there are no relations between characteristic functions corresponding to different tubes.

Finally, let $\alpha_0=\underline{dim}(P(e))$. Define an element
\[\widehat{E}_0(m)=\epsilon^{*}(\alpha_0, m\delta-\alpha_0)[\widehat{E}_{\alpha_0}, \widehat{E}_{m\delta-\alpha_0}]\] 
Note that this bracket is defined by restricting the bracket in $H_{\vec{Q}}$ to semistable irreducible components. This is equivalent to restricting to modules $X\in \Pi_Q$-mod such that $\rho(X)$ is indecomposable. Define $\Phi(\alpha_0(m))= \widehat{E}_0(m)$.

The elements $\{\alpha_{i, j}(m)\}$ and $\alpha_0(m)$ form a basis for $\mathfrak{n}^{\epsilon^*}_{m\delta}(\vec{Q})$. One can now easily check that the assignments
\begin{align} \tilde{e}_{\alpha}& \mapsto \widehat{E}_{\alpha} \nonumber \\ 
\alpha_{i, j}(m)& \mapsto \widehat{E}_{i, j}(m)-\widehat{E}_{i, j+1}(m)\nonumber \\
\alpha_0(m)& \mapsto -\widehat{E}_0(m) \nonumber
\end{align}
extend to an isomorphism $\Phi:\mathfrak{n}^{\epsilon^*}(\vec{Q})\xrightarrow{\sim} H^{ss}_{\vec{Q}}$. By Theorem 4, this gives us an isomorphism $\mathfrak{n}\xrightarrow{\sim} H^{ss}_{\vec{Q}}$.

\end{proof}

It follows from Corollaries 1 and 2 that the functions $\widehat{E}_0(m)$ are supported on $\Pi_Q$-modules that are S-equivalent to the regular modules in the support of $E_0(m)$. Similarly, the functions $\widehat{E}_{i, j, l}$ are supported on semistable $\Pi_Q$-modules that are S-equivalent to $P_{z_i, j, l}$. In dimension $m\delta$, these S-equivalence classes are exactly the \textit{semistable diagonals} of \cite{FMV2} and \cite{HM}.

\bibliographystyle{amsalpha}

\end{document}